\newtheorem{Theorem}{Theorem}
\newtheorem{MainTheorem}[Theorem]{Main Theorem}
\newtheorem{Lemma}[Theorem]{Lemma}
\newtheorem{Corollary}[Theorem]{Corollary}
\newtheorem{Proposition}[Theorem]{Proposition}
\newtheorem{Remark}[Theorem]{Remark}
\newtheorem{Assertion}[Theorem]{Assertion}
\newcommand{\eps}{\varepsilon}
\newcommand\la{\lambda}
\newcommand\vphi{\varphi}
\newcommand\al{\alpha}
\newcommand\be{\beta}
\newcommand\ga{\gamma}
\newcommand\Ga{\Gamma}
\newcommand\de{\delta}
\newcommand\De{\Delta}
\newcommand\BC{ {\mathbb C}}
\newcommand\BN{ {\mathbb  N}}
\newcommand\BQ{ {\mathbb  Q}}
\newcommand\BZ{{\mathbb  Z}}
\newcommand\BR{ {\mathbb  R}}
\newcommand\BS{ {\mathbb  S}}
\newcommand\bfq{\mbox {\bf  q}}
\newcommand\bfu{\mbox {\bf  u}}
\newcommand\bfv{\mbox {\bf  v}}
\newcommand\bfx{\mbox {\bf  x}}
\newcommand\bfw{\mbox {\bf  w}}
\newcommand\bfz{\mbox {\bf  z}}
\newcommand\bfa{\mbox {\bf  a}}
\newcommand\bfb{\mbox {\bf  b}}
\newcommand\gtr{\bigtriangledown}
\newcommand\nl{\newline}
\newcommand\grad{\rm{grad}\/}
\newcommand\Ker{\rm{Ker}\/}
\newcommand\rdeg{{\rm{rdeg}\/}}
\newcommand\pdeg{{\rm{pdeg}\/}}
\newcommand\inv{^{-1}}
\def\mapright#1{\smash{\mathop{\longrightarrow}\limits^{{#1}}}}
\def\mapdown#1{\Big\downarrow\rlap{$\vcenter{\hbox{$#1$}}$}}
\def\inv{^{-1}}
\begin{document}
\title[Contact structure on mixed links
]
{   Contact structure on mixed links}

\author
[M. Oka ]
{Mutsuo Oka }
\address{\vtop{
\hbox{Department of Mathematics}
\hbox{Tokyo  University of Science}
\hbox{1-3 Kagurazaka, Shinjuku-ku}
\hbox{Tokyo 162-8601}
\hbox{\rm{E-mail}: {\rm oka@rs.kagu.tus.ac.jp}}
}}
\keywords {Contact structure, Milnor fibration, holomophic-like}
\subjclass[2000]{32S55,53D10,32S25}

\begin{abstract}
A strongly non-degenerate mixed function has a  Milnor  open book
 structures on a sufficiently small sphere. We introduce the notion of
 {\em a holomorphic-like} mixed function
and  we will show that  a link defined by such a mixed  function has a canonical contact structure.
Then we will show that this contact structure for a certain holomorphic-like mixed function
 is carried by the Milnor open book. 
\end{abstract}
\maketitle

\maketitle

\section{Introduction}
Let $f(\bfz)$ a holomorphic function with an isolated critical point at
the origin.
Then the Milnor fibration of $f$ carries a canonical contact structure (\cite{Giroux, C-N-PP}).
We consider a similar problem for mixed functions $f(\bfz,\bar \bfz)$.
We have shown that strongly non-degenerate mixed  functions have Milnor fibrations on a small sphere
\cite{OkaMix}. 
However the situation is very different in the point  that  the tangent space of  a mixed hypersurface is not  a complex vector space.
Therefore the  restriction of the canonical contact structure need not give a contact structure
on the mixed link. We introduce a class of mixed functions called
{\em holomorphic-like}
and we show that the restriction of the canonical contact structure gives a contact structure on the link (Theorem \ref{holomorphic-like}).
A typical  class of mixed functions we consider are given as the pull-back  $g(\bfw,\bar\bfw)=\vphi_{a,b}^* f(\bfw,\bar\bfw)$
of  a convenient non-degenerate holomorphic function $f(\bfz)$ by 
a homogeneous mixed covering $\vphi_{a,b}:\BC^n\to \BC^n$
which is 
defined by 
$\vphi_{a,b}(\bfw,\bar\bfw)=(w_1^a\bar w_1^b,\dots, w_n^a\bar w_n^b)$.
Such a pull-back is a typical example of a holomorphic-like mixed function.
Then we will show also that  the Milnor open book is compatible with the
canonical  contact structures
for these mixed functions
 (Theorem \ref{MainTheorem2}).
 
 I would like to thank M. Ishikawa and V. Blanloeil for the  stimulating 
 discussions.

\section{Preliminaries}
\subsection{Mixed functions and polar weightedness}
Consider complex analytic  function of $2n$-variables  $F(z_1,\dots,z_n,w_1,\dots, w_n)$
expanded in a convergent series $\sum_{\nu,\mu}c_{\nu,\mu}
\bfz^\nu{\bfw}^\mu$ and consider the restriction $f(\bfz,\bar\bfz)$
which is defined by the 
substitution $w_j=\bar z_j,\,j=1,\dots,n$.
We  call this real analytic function {\em an analytic  mixed function}.
Namely
$f(\bfz,\bar \bfz)=\sum_{\nu,\mu}c_{\nu,\mu}\bfz^\nu\bar\bfz^\mu$
where $\bfz=(z_1,\dots,z_n)$, $\bar \bfz=(\bar z_1,\dots,\bar z_n)$,
$ \bfz^\nu=z_1^{\nu_1}\cdots z_n^{\nu_n}$ for $ \nu=(\nu_1,\dots,\nu_n)$
(respectively
$\bar\bfz^\mu=\bar z_1^{\mu_1}\cdots\bar z_n^{\mu_n}$  for 
$ \mu=(\mu_1,\dots,\mu_n)$).
 Here $\bar z_i$ is the complex conjugate of $z_i$.  Assume that $f$ is a polynomial.
Writing $z_j=x_{j}+ i \, y_{j}$, it is easy to see that $f$ is a
polynomial of $2n$-variables $x_1,y_1,\dots, x_{n},y_n$.
 In this case, we  call $f$ {\em a mixed polynomial}
of
$z_1,\dots, z_n$.

A mixed polynomial
$f(\bfz,\bar \bfz)$ is called {\em polar weighted homogeneous}
if there exist   positive  integers
$q_1,\dots, q_n$ and $p_1,\dots, p_n$ and non-zero integers $m_r,\,m_p$
such that 
\begin{eqnarray*}
&\gcd(q_1,\dots,q_n)=1,\quad\gcd(p_1,\dots, p_n)=1,\, \\
&\sum_{j=1}^n q_j( \nu_j+\mu_j)=m_r,\quad
\sum_{j=1}^n p_j (\nu_j-\mu_j)=m_p,\quad \text{if}\,\, c_{\nu,\mu}\ne 0
\end{eqnarray*} 
The weight vectors $Q=(q_1,\dots, q_n)$ and $P=(p_1,\dots, p_n)$ are called {\em the radial weight and the polar weight } respectively.
Using  radial weight and the polar weight, we define
the radial $\BR_{>0}$-action  and the polar $S^1$-action as follows.
\begin{eqnarray*}
& r\circ \bfz=(r^{q_1} z_1,\dots, 
r^{q_n} z_n),\quad r\in \BR_{>0}\\
e^{i\eta}\circ \bfz &=(e^{ip_1\eta}z_1,\dots, 
e^{i p_n\eta}z_n),\quad e^{i\eta}\in S^1
\end{eqnarray*}
Then 
$f$ satisfies the
functional equalities
\begin{eqnarray}\label{polar-weight}
f(r \circ (\bfz,\bar \bfz) )&=r^{m_r}f(\bfz,\bar\bfz),\, \, r \in \BR_{>0}\\
f(e^{i\eta}\circ (\bfz,\bar \bfz) )&=e^{i m_p\eta}f(\bfz,\bar\bfz),\,\,e^{i\eta}\in S^1.
\end{eqnarray}
These equalities give the following Euler equalities.
\begin{eqnarray}
&\text{(Radial Euler equality)}:\,\,m_r f(\bfz,\bar\bfz)=\sum_{i=1}^n q_i(\frac{\partial f}{\partial
 z_i}z_i+\frac{\partial f}{\partial
 \bar z_i}\bar z_i
)\\
&\text{(Polar Euler equality)}:\,\,m_p f(\bfz,\bar\bfz)=\sum_{i=1}^n p_i(\frac{\partial f}{\partial
 z_i}z_i-\frac{\partial f}{\partial
 \bar z_i}\bar z_i
).
\end{eqnarray}
We consider a special type of polar weighted homogeneous polynomial.
A polar weighted homogeneous polynomial $f(\bfz,\bar\bfz)$ is called 
{\em strongly polar weighted homogeneous}   if the radial weight and the polar weights are the same,
{\it i.e.},  $p_j=q_j$ for $j=1,\dots, n$.
In this case, the radial and polar Euler equalities gives:
\begin{eqnarray}\label{strongEuler}
\begin{cases}
&\sum_{j=1}^np_jz_j\frac{\partial f}{\partial z_j}(\bfz,\bar \bfz)=\frac{m_r+m_p}2 f(\bfz,\bar \bfz),\\
& \sum_{j=1}^np_j\bar z_j\frac{\partial f}{\partial\bar z_j}(\bfz,\bar \bfz)=\frac{m_r-m_p}2 f(\bfz,\bar \bfz)
\end{cases}
\end{eqnarray}
The above equalities say that $f(\bfz,\bar\bfz)$ is a weighted homogeneous
polynomial for $\bfz$ and $\bar\bfz$ independently.
Furthermore  $f(\bfz,\bar\bfz)$ is called 
{\em strongly polar positive weighted homogeneous}   if $\pdeg f=m_p>0$.
\subsection{Euclidean metric and hermitian product}
 Recall that $\BC^n$ is canonically identified with $\BR^{2n}$ by 
 $\bfz=(z_1,\dots, z_n)\mapsto \bfz_{\BR}:=(x_1,y_1,\dots, x_n,y_n)\in\BR^{2n}$.
 The inner product in $\BR^{2n}$ is simply the real part of the hermitian product in $\BC^n$.
 We denote the hermitian inner product  as $(\bfz,\bfw)$ for $\bfz,\bfw\in \BC^n$
  and the inner product as the vector in 
 $\BR^{2n}$ as $(\bfz_{\BR},\bfw_{\BR})_{\BR}$.
Namely putting $\bfw=(w_1,\dots, w_n) $ with 
$w_j=u_j+iv_j$,
\[
 (\bfz,\bfw)=\sum_{j=1}^n z_j\bar w_j,\,\,\,
(\bfz_{\BR},\bfw_{\BR})_{\BR}=\sum_{j=1}^n(x_ju_j+y_jv_j).
\] Thus  $   \Re (\bfz,\bfw)=(\bfz_{\BR},\bfw_{\BR})_{\BR}$.
By the triviality of the tangent bundle $T_p\BR^{2n}$, we identify
$T_p\BR^{2n}$ with $\BR^{2n}=\BC^n$.
Thus $\sum_{j=1}^n  (x_j(\frac{\partial}{\partial
x_j})_p+y_j(\frac{\partial}{\partial y_j})_p)$
is identified with 
 $\bfz=(z_1,\dots, z_n)\in \BC^n$ and $z_j=x_j+i\,y_j$.
 Recall  the complexified  tangent vectors are defined by
\begin{eqnarray*}
&\frac{\partial}{\partial z_j}=\frac 12 \left(
\frac{\partial}{\partial x_j}-i\frac{\partial}{\partial y_j}\right),
\quad\frac{\partial}{\partial \bar z_j}=\frac 12 \left(
\frac{\partial}{\partial x_j}+i\frac{\partial}{\partial y_j}
\right)\end{eqnarray*}
Thus a complex vector $\bfz\in \BC^{2n}=\BR^{2n}$ is identified with the 
tangent vector 
\[\sum_{j=1}^n (x_j\frac {\partial}{\partial x_j}+y_j\frac{\partial}{\partial y_j})=\sum_{j=1}^n (z_j\frac{\partial}{\partial z_j}+\bar z_j \frac{\partial}{\partial \bar z_j}).
\]
$J:T\BC^n\to T \BC^n$ is the almost complex structure defined by
\[\begin{split}
J(\frac{\partial}{\partial x_j})=\frac{\partial}{\partial y_j},\quad
J(\frac{\partial}{\partial y_j})=-\frac{\partial}{\partial x_j}\\
J(\frac{\partial}{\partial z_j})=i\frac{\partial}{\partial z_j},\quad
J(\frac{\partial}{\partial \bar z_j})=-i\frac{\partial}{\partial \bar z_j}.
\end{split}
\]
 For a real valued mixed function 
 $h(\bfz,\bar \bfz)$, we define {\em the  real gradient  } $\grad_{\BR} h\in \BR^{2n}$
 (or {\em  Riemannian gradient} in \cite{C-N-PP}) 
 as
 \begin{eqnarray*}
 &{\grad _{\BR}}\, h(\bfz,\bar \bfz)=\left(\cfrac{\partial h}{\partial
  x_1}(\bfz,\bar \bfz),\cfrac{\partial h}{\partial y_1}(\bfz,\bar
  \bfz)\dots, \cfrac{\partial h}{\partial x_{n}}(\bfz,\bar
  \bfz),\cfrac{\partial h}{\partial y_n}(\bfz,\bar \bfz)\right)\in \BR^{2n}.
 \end{eqnarray*}
 We define also {\em the complex gradient  of $h$} 
 or {\em hermitian gradient} in \cite{C-N-PP}  as follows.
 \[
\bigtriangledown\,h(\bfz,\bar \bfz)=2\left(\frac{\overline{\partial h}}{\partial z_1}(\bfz,\bar\bfz),
 \dots, \frac{\overline{\partial h}}{\partial
 z_n}(\bfz,\bar\bfz)\right)\in \BC^n.
 \]

 \begin{Proposition} Assume that $h(\bfz,\bar\bfz)$ is a real valued mixed function and
 let $\bfz(t)=(z_1(t),\dots, z_n(t)),\, z_j(t)=x_j(t)+ i\, y_j(t),\,-1\le t\le 1$ be a smooth curve in $\BC^n$
 and let  $\bfz(0)=\bfu$,
 $\frac{d\bfz}{dt}(0)=\bfv$.
 Then we have
 \begin{eqnarray*}
 \frac{dh(\bfz(t),\bar\bfz(t))}{dt}(0)=
 ({\bfv}_{\BR},\grad_{\BR}h(\bfu,\bar\bfu))_{\BR}
 &=   \Re(\bfv,{\bigtriangledown}\,h(\bfu,\bar\bfu)).
 \end{eqnarray*}
 \end{Proposition}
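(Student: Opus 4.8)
The plan is to reduce the statement to the ordinary chain rule together with the Wirtinger calculus recalled above. Write the curve in real coordinates as $(x_1(t),y_1(t),\dots,x_n(t),y_n(t))$, so that $\bfv_{\BR}=(\dot x_1(0),\dot y_1(0),\dots,\dot x_n(0),\dot y_n(0))$ and $\bfu_{\BR}=(x_1(0),y_1(0),\dots,x_n(0),y_n(0))$. Since $h$ is a smooth real-valued function and $t\mapsto(x_1(t),y_1(t),\dots,x_n(t),y_n(t))$ is a smooth curve in $\BR^{2n}$, the classical chain rule gives
\[
\frac{dh(\bfz(t),\bar\bfz(t))}{dt}(0)=\sum_{j=1}^n\left(\frac{\partial h}{\partial x_j}(\bfu,\bar\bfu)\,\dot x_j(0)+\frac{\partial h}{\partial y_j}(\bfu,\bar\bfu)\,\dot y_j(0)\right),
\]
and by the very definitions of $\grad_{\BR}h$ and of the Euclidean inner product $(\ ,\ )_{\BR}$ this is precisely $(\bfv_{\BR},\grad_{\BR}h(\bfu,\bar\bfu))_{\BR}$. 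This establishes the first equality with essentially no work.

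For the second equality I would pass to the complexified derivatives. Substituting $\partial/\partial x_j=\partial/\partial z_j+\partial/\partial\bar z_j$ and $\partial/\partial y_j=i(\partial/\partial z_j-\partial/\partial\bar z_j)$ (equivalently, using $dh=\sum_j(\partial h/\partial z_j\,dz_j+\partial h/\partial\bar z_j\,d\bar z_j)$) turns the chain-rule expression into the Wirtinger form
\[
\frac{dh(\bfz(t),\bar\bfz(t))}{dt}(0)=\sum_{j=1}^n\left(\frac{\partial h}{\partial z_j}(\bfu,\bar\bfu)\,v_j+\frac{\partial h}{\partial\bar z_j}(\bfu,\bar\bfu)\,\bar v_j\right),\qquad v_j=\dot z_j(0).
\]
Now I invoke that $h$ is real-valued: this forces $\overline{\partial h/\partial z_j}=\partial h/\partial\bar z_j$, so the second summand is the complex conjugate of the first and the sum equals $2\Re\bigl(\sum_{j=1}^n\tfrac{\partial h}{\partial z_j}(\bfu,\bar\bfu)\,v_j\bigr)$. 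Finally, by the definition $\bigtriangledown h=2(\overline{\partial h/\partial z_1},\dots,\overline{\partial h/\partial z_n})$ and the convention $(\bfz,\bfw)=\sum_j z_j\bar w_j$ for the hermitian product, one has $(\bfv,\bigtriangledown h(\bfu,\bar\bfu))=\sum_{j=1}^n v_j\cdot 2\tfrac{\partial h}{\partial z_j}(\bfu,\bar\bfu)$, whose real part is exactly the quantity just obtained. This closes the chain of equalities.

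I do not expect a genuine obstacle: the statement is the chain rule dressed in complex notation. The only points that need care are the bookkeeping of the factor $2$ built into both $\bigtriangledown h$ and the Wirtinger operators, and invoking the reality of $h$ at precisely the place where the two conjugate halves of the Wirtinger sum are merged into a real part. As an independent check — and an alternative to routing through the curve — one can verify the purely algebraic identity $\Re(\bfv,\bigtriangledown h(\bfu,\bar\bfu))=(\bfv_{\BR},\grad_{\BR}h(\bfu,\bar\bfu))_{\BR}$ by expanding $\partial h/\partial z_j=\tfrac12(\partial h/\partial x_j-i\,\partial h/\partial y_j)$ together with $v_j=\dot x_j(0)+i\,\dot y_j(0)$ and collecting real parts term by term; this isolates the linear-algebra content and makes the coefficient $2$ in $\bigtriangledown h$ visibly the right one.
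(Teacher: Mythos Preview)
Your proof is correct and follows essentially the same route as the paper: apply the Wirtinger chain rule, use the reality of $h$ to identify $\partial h/\partial\bar z_j$ with $\overline{\partial h/\partial z_j}$, and recognize the resulting expression as $\Re(\bfv,\bigtriangledown h)$. You are simply more explicit than the paper about the first equality and the factor of $2$, and you add a helpful direct algebraic check, but the argument is the same.
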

 \begin{proof}
 The second equality follows from the simple calculation:
 \[\begin{split}
 \frac{dh(\bfz(t),\bar\bfz(t))}{dt}(0)&=\sum_{i=1}^n v_i
 \frac{\partial h}{\partial z_i}(\bfz_0,\bar\bfz_0)+
 \sum_{i=1}^n \bar v_i \frac{\partial h}{\partial \bar z_i}(\bfz_0,\bar\bfz_0)\\
 &=\sum_{i=1}^n v_i \frac{\partial h}{\partial z_i}(\bfz_0,\bar\bfz_0)+
\sum_{i=1}^n \bar v_i \overline{\frac{\partial h}{\partial z_i}}
(\bfz_0,\bar\bfz_0)\\
 &=   \Re (\bfv,\bigtriangledown h(\bfz_0,\bar\bfz_0)).
 \end{split}
 \]
 \end{proof}\noindent
 Thus  the tangent space of the  real hypersurface $H:=h\inv(0)$ at a smooth point $\bfz_0$
 is given by
 \[\begin{split}
 T_{\bfx_0}H&=\{\bfu\in \BR^{2n}\,|\, (\bfu,{\grad_{\BR}} h(\bfz_0,\bar \bfz_0))_{\BR}=0\}\\
 &=\{\bfw\in \BC^n\,|\,    \Re(\bfw,{ \bigtriangledown\,} h(\bfz_0,\bar\bfz_0))=0\}.
 \end{split}
 \]
For our  later purpose, it is more convenient to use the hermitian 
gradient.
\subsubsection{Holomorphic function case.}
Assume that $f(\bfz)$ is a holomorphic function defined on a neighborhood of the origin.
Then the hermitian gradient 
$\bigtriangledown f$ is defined  by (see \cite{Milnor,C-N-PP})
\[
\bigtriangledown f(\bfz)=(\overline{\frac{\partial f}{\partial z_1}},\dots,
\overline{\frac{\partial f}{\partial z_n}}).
\]
Consider the complex hypersurface $V=f\inv(0)\subset \BC^n$.
\begin{Proposition}
Assume that $p\in V$ is a regular point. Then the tangent space
$T_pV$ is the complex subspace given by
\[
T_pV=\{\bfv\in \BC^n\,|\, (\bfv,\bigtriangledown f(\bfz))=0\}.
\]
\end{Proposition}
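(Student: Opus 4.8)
The plan is to deduce the statement from the previous Proposition applied to the real and imaginary parts of $f$, and then to repackage the two resulting real linear conditions into a single hermitian one by means of the Cauchy--Riemann equations.

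First, since $f$ is holomorphic and $p$ is a regular point, $V=f\inv(0)$ is, near $p$, a complex submanifold of complex codimension one, so $T_pV$ is a complex subspace with $\dim_{\BC}T_pV=n-1$, i.e. $\dim_{\BR}T_pV=2n-2$. Write $f=g+i\,k$ with $g=\Re f$ and $k=\Im f$, which are real-valued mixed functions; then $V=g\inv(0)\cap k\inv(0)$. I will check below that $\bigtriangledown g(p)$ and $\bigtriangledown k(p)$ are $\BR$-linearly independent, so $p$ is a regular point of both $g$ and $k$, the intersection is transverse, and $T_pV=T_p(g\inv(0))\cap T_p(k\inv(0))$. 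By the Proposition above together with the ensuing description of the tangent space of a real hypersurface,
\[
T_pV=\{\bfw\in\BC^n\mid \Re(\bfw,\bigtriangledown g(p))=0 \ \text{and}\ \Re(\bfw,\bigtriangledown k(p))=0\}.
\]

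Next I would compute $\bigtriangledown g$ and $\bigtriangledown k$ in terms of the holomorphic hermitian gradient $\bigtriangledown f=(\overline{\partial f/\partial z_1},\dots,\overline{\partial f/\partial z_n})$. From $g=(f+\bar f)/2$ and $k=(f-\bar f)/(2i)$, together with $\partial\bar f/\partial z_j=\overline{\partial f/\partial\bar z_j}=0$ (holomorphy), one gets $\partial g/\partial z_j=\tfrac12\,\partial f/\partial z_j$ and $\partial k/\partial z_j=\tfrac1{2i}\,\partial f/\partial z_j$; plugging these into the definition $\bigtriangledown h=2(\overline{\partial h/\partial z_1},\dots)$ for a real function $h$ yields $\bigtriangledown g=\bigtriangledown f$ and $\bigtriangledown k=i\,\bigtriangledown f$ (in particular they are $\BR$-independent as soon as $\bigtriangledown f(p)\ne 0$, justifying the transversality used above). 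Since the hermitian product is conjugate-linear in the second variable, $(\bfw,i\,\bigtriangledown f)=-i\,(\bfw,\bigtriangledown f)$, hence $\Re(\bfw,\bigtriangledown k(p))=\Re\bigl(-i(\bfw,\bigtriangledown f(p))\bigr)=\Im(\bfw,\bigtriangledown f(p))$. Thus the two real conditions above are together equivalent to the single complex equation $(\bfw,\bigtriangledown f(p))=0$, giving $T_pV\subseteq\{\bfv\in\BC^n\mid(\bfv,\bigtriangledown f(p))=0\}$.

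Finally, regularity gives $\bigtriangledown f(p)\ne 0$, so the right-hand side is a complex hyperplane of real dimension $2n-2=\dim_{\BR}T_pV$, and the inclusion is an equality. I do not anticipate any genuine obstacle here: the argument is bookkeeping, the only delicate point being to keep the factor $2$ in the real hermitian gradient $\bigtriangledown h$ consistent with the factor-free holomorphic hermitian gradient $\bigtriangledown f$. (An alternative one-line route: for holomorphic $f$ the differential $df_p$ is $\BC$-linear with $df_p(\bfv)=\sum_j(\partial f/\partial z_j)(p)\,v_j=(\bfv,\bigtriangledown f(p))$, while $T_pV=\ker df_p$; I would nonetheless present the gradient computation, as it is the version that matches the paper's subsequent use of hermitian gradients on mixed links.)
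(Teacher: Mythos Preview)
Your argument is correct. The paper actually states this proposition without proof, treating it as a standard fact about holomorphic hypersurfaces; your derivation via the real and imaginary parts and the identities $\bigtriangledown g=\bigtriangledown f$, $\bigtriangledown k=i\,\bigtriangledown f$ is exactly the computation that justifies it, and your alternative one-line route via $T_pV=\ker df_p$ is the quickest way to see it.
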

\begin{Remark}
Let $f(\bfz,\bar\bfz)$ be a  complex valued mixed  function and write
$f(\bfz,\bar\bfz)=g(\bfz,\bar\bfz)+i\,h(\bfz,\bar\bfz)$. Consider a mixed hypersurface $V=f\inv(0)$
and assume that $p\in V$ is a mixed regular point.
Then the tangent space $T_pV$ has no complex structure in general and there does not exist
a single gradient vector to describe $T_pV$.
It is described by two hermitian  gradient vectors as
\[
T_pV=\{\bfv\in \BC^n\,|\,
\Re (\bfv,\bigtriangledown g(p,\bar p))=\Re(\bfv,\bigtriangledown h(p,\bar p))=0\}.
\]
\end{Remark}
 \subsubsection{Weighted spheres}
 For a given positive integer vector $\bfa=(a_1,\dots, a_n)$ and
a positive number $r>0$, we consider 
 \[
 \rho_{\bfa}(\bfz)=\sum_{j=1}^n a_j|z_j|^2=\sum_{j=1}^{n} a_j(x_{j}^2+y_j^2)
 \]
 and we define {\em the weighted sphere } ${\BS}_r(\bfa)$ by
  \[{\BS_r}(\bfa):=\{\bfz\in \BC^n\,|\, \rho_{\bfa}(\bfz)=r^2\}.\]
  The standard sphere is  defined  by the weight vector $\bfa=(1,\dots,1)$ and 
  in this case, we simply write ${\BS_r}$.
 Put $\tilde \bfz(\bfa)=(a_1z_1,\dots, a_n z_n)$.
   Then 
 $\bigtriangledown\, \rho_{\bfa}(\bfz)=2 \tilde \bfz(\bfa)$. 
  Therefore the tangent space at $\bfz_0\in {\BS_r}(\bfa)$  is given by
 \[\begin{split}
 T_{\bfz_0}{\BS}_{r}(\bfa) =\{\bfw\,|\,    \Re (\bfw,\tilde \bfz_0(\bfa))=0\},\quad
 T_{\bfz_0}{\BS}_{r}=\{\bfw\,|\,    \Re (\bfw,\bfz_0)=0\}.
 \end{split}
 \]
\subsubsection{Transversality of a polar weighted homogeneous
   hypersurface.}
Let $f$ be a polar weighted homogeneous polynomial
of radial weight type \nl
$(q_1,\dots, q_n;m_r)$
and of polar weight type $(p_1,\dots, p_n;m_p)$. 
Let $V=f\inv(0)$ and write
$f(\bfz,\bar\bfz)=h(\bfz,\bar\bfz)+ig(\bfz,\bar\bfz)$ with real valued
mixed functions
$h,g$.
\begin{Proposition} (Transversality)\label{Transversality}
Assume that $V$ has an isolated mixed singularity at the origin.
Then 
 the
 sphere
 ${\BS_r}(\bfa)=\{\bfz\in\BC^n; \rho_{\bfa}(\bfz)=r^2\}$
intersects transversely with $V$ for any $r>0$.
\end{Proposition}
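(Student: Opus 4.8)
The plan is to run the classical Milnor-type transversality argument in the present weighted, mixed setting. Fix a point $\bfz_0\in V\cap\BS_r(\bfa)$. Since $r>0$ we have $\bfz_0\ne 0$, and because $V$ has an isolated mixed singularity at the origin, $\bfz_0$ is a mixed regular point of $f$. Writing $f=h+ig$ with $h,g$ real valued, this means that $\bigtriangledown h(\bfz_0,\bar\bfz_0)$ and $\bigtriangledown g(\bfz_0,\bar\bfz_0)$ are $\BR$-linearly independent; moreover $V$ is a smooth real codimension-$2$ submanifold near $\bfz_0$ with $T_{\bfz_0}V=\{\bfw\in\BC^n\mid \Re(\bfw,\bigtriangledown h)=\Re(\bfw,\bigtriangledown g)=0\}$. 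Starting from $h=(f+\bar f)/2$ and $g=(f-\bar f)/2i$ one computes (using the factor $2$ built into $\bigtriangledown$ of a real function) that $\bigtriangledown h=\bigl(\overline{\partial f/\partial z_j}+\partial f/\partial\bar z_j\bigr)_j$ and $\bigtriangledown g=i\bigl(\overline{\partial f/\partial z_j}-\partial f/\partial\bar z_j\bigr)_j$; hence every real-linear combination $\alpha\bigtriangledown h+\beta\bigtriangledown g$ equals $\gamma\,(\overline{\partial f/\partial z_j})_j+\bar\gamma\,(\partial f/\partial\bar z_j)_j$ with $\gamma=\alpha+i\beta\in\BC$.

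Since $\BS_r(\bfa)$ is a real hypersurface, $V$ and $\BS_r(\bfa)$ fail to meet transversely at $\bfz_0$ if and only if $T_{\bfz_0}V\subset T_{\bfz_0}\BS_r(\bfa)$. Using $T_{\bfz_0}\BS_r(\bfa)=\{\bfw\mid\Re(\bfw,(a_1z_1,\dots,a_nz_n))=0\}$ together with the $\BR$-independence of $\bigtriangledown h,\bigtriangledown g$, this inclusion holds exactly when $(a_1z_1,\dots,a_nz_n)$ lies in the real linear span of $\bigtriangledown h(\bfz_0,\bar\bfz_0)$ and $\bigtriangledown g(\bfz_0,\bar\bfz_0)$. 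So, aiming for a contradiction, assume $(a_1z_1,\dots,a_nz_n)=\gamma\,(\overline{\partial f/\partial z_j}(\bfz_0,\bar\bfz_0))_j+\bar\gamma\,(\partial f/\partial\bar z_j(\bfz_0,\bar\bfz_0))_j$ for some $\gamma\in\BC$.

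Now take the hermitian inner product of both sides with $(q_1z_1,\dots,q_nz_n)$, where $Q=(q_1,\dots,q_n)$ is the radial weight of $f$. The left-hand side becomes $\sum_{j=1}^n a_jq_j|z_j|^2$, a strictly positive real number since $\bfz_0\ne 0$ and all $a_j,q_j$ are positive. On the right-hand side, set $A=\sum_j q_jz_j\frac{\partial f}{\partial z_j}(\bfz_0,\bar\bfz_0)$ and $B=\sum_j q_j\bar z_j\frac{\partial f}{\partial\bar z_j}(\bfz_0,\bar\bfz_0)$; a direct check shows that the hermitian product of $(\overline{\partial f/\partial z_j})_j$ with $(q_1z_1,\dots,q_nz_n)$ equals $\bar A$ and that of $(\partial f/\partial\bar z_j)_j$ equals $B$, so the right-hand side is $\gamma\bar A+\bar\gamma B$. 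But $\bfz_0\in V$ gives $f(\bfz_0,\bar\bfz_0)=0$, so the radial Euler equality $m_rf=\sum_j q_j\bigl(z_j\frac{\partial f}{\partial z_j}+\bar z_j\frac{\partial f}{\partial\bar z_j}\bigr)$ yields $A+B=0$, i.e. $B=-A$. Therefore the right-hand side equals $\gamma\bar A-\bar\gamma A=2i\,\Im(\gamma\bar A)$, which is purely imaginary and hence cannot be a strictly positive real number. This contradiction shows that no such $\bfz_0$ exists, so $\BS_r(\bfa)$ and $V$ are transverse for every $r>0$.

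The only step that demands care is the reduction in the second paragraph: it genuinely relies on $\bfz_0$ being a mixed regular point, so that $\bigtriangledown h$ and $\bigtriangledown g$ are linearly independent and actually cut out $T_{\bfz_0}V$ --- without regularity the sphere's conormal direction need not be expressible through these two gradients alone. Everything else is bookkeeping: once the real gradients are rewritten via the Wirtinger derivatives of $f$, the radial Euler identity collapses the relevant pairing to a purely imaginary quantity and the argument closes. Note that the polar weight $P$ and the polar degree $m_p$ play no role here --- radial weight homogeneity alone suffices.
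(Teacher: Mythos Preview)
Your proof is correct and follows essentially the same approach as the paper's: both assume a non-transverse point, write the sphere's normal as a real combination of $\bigtriangledown h$ and $\bigtriangledown g$, and reach a contradiction by pairing with the radial-weight vector $(q_1z_1,\dots,q_nz_n)$ via the radial Euler identity. The only cosmetic difference is that the paper phrases this last step geometrically through the radial orbit curve $t\circ\bfz_0$ (so that $dh/dt=dg/dt=0$ along it), whereas you apply the Euler equality for $f$ directly and note that the right-hand side becomes purely imaginary---the two computations are equivalent.
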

\begin{proof}
The proof is the exact same as that of Proposition 4, \cite{OkaPolar}.
Assume that $\bfz_0\in S_{\bfa}(r)\cap V$ is a point where the sphere is
 not transverse.
Note that the tangent space is the real orthogonal space to
two hermitian gradient vectors
$\bigtriangledown\,h(\bfz_0,\bar \bfz_0)$ and $\bigtriangledown\,g(\bfz_0,\bar \bfz_0)$.
Let $\rho_{\bfa}(\bfz,\bar\bfz)=\sum_{j=1}^n a_j|z_j|^2$.
The non-transversality implies  for example, there is a linear relation
\begin{eqnarray}\label{contradiction}
{ \bigtriangledown\,}\rho_{\bfa}(\bfz_0,\bar \bfz_0)=\al\, { \bigtriangledown\,} h(\bfz_0,\bar\bfz_0)+\be\, 
{ \bigtriangledown\,}g(\bfz_0,\bar\bfz_0)
\end{eqnarray}
with some $\al,\be\in
 \BR$.
 We consider the radial orbit curve
 $\bfz(t)=t\circ \bfz_0=(t^{q_1}z_{01},\dots, t^{q_n}z_{0n})$.
 The the tangent vector
 $\frac{dz}{dt}(1)=\tilde \bfz_0(\bfq)=(q_1z_{01},\dots, q_nz_{0n})$
 with $\bfq=(q_1,\dots, q_n)$.
Then we have an inequality:
\begin{eqnarray}\label{positive}
\frac{d\,\rho_{\bfa}(\bfz(t))}{dt}|_{t=1}=   \Re(\tilde \bfz_0(\bfq),{ \bigtriangledown\,}\rho_{\bfa}(\bfz_0,\bar\bfz_0))
=   \Re(\tilde \bfz_0(\bfq),2\tilde \bfz_0(\bfa))>0
\end{eqnarray}
On the other hand, the mixed real polynomials $h(\bfz,\bar \bfz),g((\bfz,\bar \bfz)$ are radially weighted homogeneous under the same weight $\bfq=(q_1,\dots, q_n)$.
This implies $h(\bfz(t))\equiv g(\bfz(t))\equiv  0$ and we have two equalities:
\begin{eqnarray*}
&\cfrac{dh(\bfz(t))}{dt}|_{t=1}=   \Re(\tilde  \bfz_0(\bfq),\bigtriangledown\,h(\bfz_0,\bar \bfz_0))=0,\\
&\cfrac{dg(\bfz(t))}{dt}|_{t=1}=   \Re(\tilde  \bfz_0(\bfq),\bigtriangledown\,g(\bfz_0,\bar \bfz_0))=0.
\end{eqnarray*}
Now we have a contradiction to ( \ref{positive})  by (   \ref{contradiction}):
\begin{eqnarray*}
&&0<  \Re \,(\tilde \bfz_0(\bfq),{ \bigtriangledown\,}\rho_{\bfa}(\bfz_0,\bar\bfz_0))=\\
&&\qquad\qquad \alpha \,  \Re (\tilde \bfz_0(\bfq),{ \bigtriangledown\,}h(\bfz_0,\bar
 \bfz_0))+
\be \,
   \Re(\tilde \bfz_0(\bfq),{ \bigtriangledown\,}g(\bfz_0,\bar \bfz_0))=0.
\end{eqnarray*}
\end{proof}
\subsection{Mixed functions of strongly polar weighted homogeneous face type.}
Consider a mixed function $f(\bfz,\bar\bfz)=\sum_{\nu,\mu}c_{\nu\mu}
\bfz^{\nu}{\bar \bfz}^\mu$. Recall that  for a weight vector $P=(p_1,\dots, p_n)$,
the face function $f_P$ is defined by 
the linear sum of the monomials with the radial degree is the minimum (\cite{OkaMix}).
Thus $f_P(\bfz,\bar\bfz)$ is a radially weighted homogeneous polynomial with the weight $P$.
\subsubsection{Definition.}
$f$ is called {\em a mixed function of polar positive weighted homogeneous face type}
if for any  weight vector $P$ with $\dim\,\De(P)=n-1$,  the face function $f_P(\bfz,\bar \bfz)$
is a polar weighted homogeneous polynomial  with  some  weight
vector $P'$
($P'$ need not be $P$) and $\pdeg_{P'}f_P>0$.

$f$ is called {\em a mixed function of strongly  polar positive  weighted homogeneous face type}
if  the face function $f_P(\bfz,\bar \bfz)$ is a strongly polar  positive
weighted homogeneous polynomial with the same  weight vector $P$, for any $P$ with 
$\dim\, \De(P)=n-1$.
\begin{Proposition}
(1) Assume that $f(\bfz,\bar \bfz)$ is a convenient mixed function 
of polar positive weighted homogeneous face type.
Then for any weight vector $P$, $f_P$ is also polar weighted homogeneous
 polynomial.

(2)
 Assume that $f(\bfz,\bar \bfz)$ is a convenient mixed function 
of strongly polar positive  weighted homogeneous face type.
Then for any weight vector $P$, $f_P$ is also a strongly  polar positive weighted homogeneous
 polynomial.
\end{Proposition}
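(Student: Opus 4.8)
The plan is to fix an arbitrary weight vector $P$ (all entries positive) and reduce to the facets of the Newton polyhedron $\Ga_+(f)$ that already appear in the hypothesis, exploiting the elementary fact that passing to a face function can only shrink the set of monomials. I will work in the space of radial exponents, recording a monomial $\bfz^\nu\bar\bfz^\mu$ by $\nu+\mu\in\BR^n_{\ge 0}$. Since $f$ is convenient, $\Ga_+(f)$ meets every coordinate axis, its recession cone is $\BR^n_{\ge 0}$, and for such a $P$ the face $\De(P)$ is compact. The combinatorial input I rely on is standard polyhedral geometry: $\De(P)$ is the intersection of the facets (the $(n-1)$-dimensional faces) of $\Ga_+(f)$ containing it, and $P$ lies in the relative interior of the cone generated by the primitive weight vectors $Q_1,\dots,Q_k$ defining those facets, so $P=\sum_{j=1}^k t_j Q_j$ with all $t_j>0$. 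Each $Q_j$ is either strictly positive --- then $\De(Q_j)$ is a compact facet and, having dimension $n-1$, is governed by the hypothesis --- or a coordinate vector $e_\ell$, in which case convenience forces $\min\{(\nu+\mu)_\ell:c_{\nu\mu}\ne 0\}=0$ and $\De(e_\ell)=\Ga_+(f)\cap\{x_\ell=0\}$. Compactness of $\De(P)$, together with $f$ having no constant term, guarantees that at least one $Q_j$ is of the first kind.

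\emph{Part (1).} One compact facet suffices: pick $Q=Q_j$ with $Q>0$ and $\De(P)\subseteq\De(Q)$. Every monomial $c_{\nu\mu}\bfz^\nu\bar\bfz^\mu$ of $f_P$ has radial exponent in $\De(P)\subseteq\De(Q)$, hence occurs in $f_Q$ as well, so the support of $f_P$ is contained in that of $f_Q$. By hypothesis $f_Q$ is polar weighted homogeneous with some polar weight $P'$ and $\pdeg_{P'}f_Q=m_p'>0$, that is, $\sum_{i=1}^n p_i'(\nu_i-\mu_i)=m_p'$ for every monomial of $f_Q$, a fortiori for every monomial of $f_P$; moreover $\sum_{i=1}^n p_i(\nu_i+\mu_i)$ is constant on the support of $f_P$ by the definition of the face function. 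Hence $f_P$ is polar weighted homogeneous with radial weight $P$ and polar weight $P'$ (and in fact polar positive).

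\emph{Part (2).} Here I use the full decomposition $P=\sum_{j=1}^k t_j Q_j$. Let $c_{\nu\mu}\bfz^\nu\bar\bfz^\mu$ be any monomial of $f_P$; its radial exponent lies in $\De(P)\subseteq\De(Q_j)$ for every $j$, so the monomial occurs in each $f_{Q_j}$. If $Q_j>0$, the hypothesis says $f_{Q_j}$ is strongly polar positive weighted homogeneous with weight $Q_j$, so $\sum_{i=1}^n (Q_j)_i(\nu_i-\mu_i)=m_p^{(j)}$ for a fixed $m_p^{(j)}>0$. If $Q_j=e_\ell$, the monomial occurs in $f_{e_\ell}$, so $(\nu+\mu)_\ell=0$; as $\nu,\mu\ge 0$ this forces $\nu_\ell=\mu_\ell=0$, whence $\sum_{i=1}^n (Q_j)_i(\nu_i-\mu_i)=(\nu-\mu)_\ell=0$. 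Weighting by the $t_j$ and summing, $\sum_{i=1}^n p_i(\nu_i-\mu_i)=\sum_{Q_j>0} t_j m_p^{(j)}=:m_p$, a constant which is strictly positive since some $Q_j$ is strictly positive and all $t_j$ and $m_p^{(j)}$ are positive. Together with the constancy of $\sum_{i=1}^n p_i(\nu_i+\mu_i)$ on the support of $f_P$, this exhibits $f_P$ as a strongly polar positive weighted homogeneous polynomial with weight $P$ (passing to $P/\gcd(p_1,\dots,p_n)$ if a primitive weight is wanted, which changes neither $\De(P)$ nor $f_P$).

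The degree bookkeeping is routine; the step that really uses convenience, and the one to handle with care, is the combinatorial claim that $P$ is a positive combination of the primitive normals of \emph{all} facets through $\De(P)$ --- the coordinate facets included. Their inclusion is exactly what makes the coordinate contributions to the polar degree vanish identically while keeping $m_p$ strictly positive; without convenience a face $\De(P)$ need not lie in any compact facet, and the hypothesis would give no control at all.
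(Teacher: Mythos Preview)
Your argument is correct and runs along the same polyhedral lines as the paper's, but the execution differs in a way worth noting. The paper proves (2) by descending induction on $\dim\De(P)$: at each step it places $P$ in a cell of the dual Newton diagram and writes $P=\sum a_jQ_j$ with $\dim\De(Q_j)\ge\dim\De(P)+1$, invoking the inductive hypothesis for each $Q_j$. You instead go in a single jump from an arbitrary $P$ straight to the facet normals, and you separate these explicitly into strictly positive normals (where the hypothesis applies) and coordinate normals $e_\ell$ (where you observe directly that $\nu_\ell=\mu_\ell=0$, so the polar contribution vanishes). This direct route avoids the bookkeeping of the induction and, more importantly, makes visible a point the paper's proof passes over in silence: the boundary vertices $Q_j=e_\ell$ of the dual fan are not strictly positive weight vectors, so the face-type hypothesis does not literally cover them, and one needs exactly the convenience-based observation you supply. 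Conversely, the paper's inductive formulation has the virtue of not relying on the claim that a relative-interior point of a (possibly non-simplicial) normal cone admits an expression with \emph{all} $t_j>0$; that claim is true, but your write-up leans on it without justification, and a reader might pause there. Either way, for the conclusion you only need that some $t_j>0$ with $Q_j$ strictly positive, which your contradiction argument already secures.
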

\begin{proof}
The assertion (1) is obvious, as any face $\De$ of $\Ga(f)$ is a subface
 of a face of dimension $n-1$.
We consider the assertion (2).
The assertion is proved by the descending induction on $\dim\,\De(P)$.
The assertion for the  case $\dim\,\De(P)=n-1$ is the definition itself.
Suppose that $\dim\,\De(P)=k$ and the assertion is true for faces with $\dim\,\De\ge k+1$.
In the dual Newton diagram, $P$ is contained in the interior of a cell
$\Xi$ whose vertices $Q$ satisfies 
$\dim\,\De(Q)\ge k+1$. This implies 
$P$ is a linear combination
$\sum_{j=1}^s a_j\,Q_j$ with $a_j\ge 0$ and $\dim\De(Q_j)\ge k+1$
where $Q_1,\dots, Q_s$ are vertices of $\Xi$.
This   implies also that $\De(P)=\cap_{j}^s\, \De(Q_j)$.
Write $f_P(\bfz,\bar\bfz)=\sum_{k} c_k \bfz^{\nu_k}{\bar\bfz}^{\mu_k}$.
As $f_{Q_j}$ is a strongly polar weighted homogeneous polynomial with weight $Q_j$,
\[
\pdeg_{Q_j} \bfz^{\nu_k}{\bar\bfz}^{\mu_k}=m_j,\, \quad j=1,\dots,s
\]
where $m_j$ is independent of $k$.
This implies 
$f_P$ is polar weighted homogeneous polynomial of weight $P$ with polar degree
$\sum_{j=1}^s a_jm_j>0$. 
\end{proof}
As an obvious but important example, we have
\begin{Proposition}
A holomorphic function $f(\bfz,\bar \bfz)$ is a mixed function of strongly polar
positive weighted homogeneous face type.
\end{Proposition}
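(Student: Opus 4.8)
The plan is to show that a holomorphic function $f(\bfz,\bar\bfz)=f(\bfz)$, viewed as a mixed function, satisfies the defining condition: for every weight vector $P$ with $\dim\De(P)=n-1$, the face function $f_P$ is strongly polar positive weighted homogeneous with the same weight $P$. The key observation is that for a holomorphic $f$ there are no $\bar\bfz$-monomials at all, so each term is $c_\nu\bfz^\nu$ with $\mu=0$. First I would recall from the definition of the face function that $f_P$ is the sum of those monomials $c_\nu\bfz^\nu$ for which $\langle P,\nu\rangle$ attains its minimum $d=d(P)$ over the support of $f$; since $P$ has all positive entries (it defines an $(n-1)$-dimensional face of the Newton boundary), this minimum is well defined and positive.

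Next I would verify the two weighted-homogeneity conditions directly. Every monomial $c_\nu\bfz^\nu$ in $f_P$ has $\nu_j=\nu_j$, $\mu_j=0$, so $\sum_j p_j(\nu_j+\mu_j)=\sum_j p_j\nu_j=d$ and $\sum_j p_j(\nu_j-\mu_j)=\sum_j p_j\nu_j=d$ as well. Thus $f_P$ is polar weighted homogeneous with radial weight $P$, polar weight $P$, $m_r=m_p=d$; in particular $p_j=q_j$ for all $j$, which is exactly the "strongly" condition, and $\gcd$ of the entries of $P$ can be arranged to be $1$ since one may divide $P$ by that gcd without changing the face. Finally $\pdeg_P f_P=m_p=d>0$ because $P$ has positive coordinates and $\nu\neq 0$ on the Newton boundary (the origin is not a point of $\Ga(f)$), giving strong polar \emph{positivity}.

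I do not anticipate a serious obstacle here; the statement is essentially the remark that a holomorphic monomial is trivially "balanced" between $\bfz$ and $\bar\bfz$ in the degenerate way that $\bar\bfz$ does not appear. The only point needing a word of care is the normalization $\gcd(p_1,\dots,p_n)=1$ in the definition of polar weighted homogeneity, and the fact that weight vectors $P$ on the dual Newton diagram are taken primitive by convention, so this is automatic. One should also note that Proposition (the convenient case) then applies to propagate the conclusion to all faces $\De(P)$ of lower dimension, but for the present statement only the $\dim\De(P)=n-1$ faces are needed, and there the argument is the one above.
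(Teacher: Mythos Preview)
Your proposal is correct and matches the paper's approach: the paper presents this proposition as ``an obvious but important example'' and gives no proof, precisely because the verification is the straightforward one you wrote out (holomorphic means $\mu=0$, so radial and polar degrees with weight $P$ coincide and are positive).
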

A mixed function of strongly polar weighted homogeneous face
 type behaves  like a non-degenerate holomorphic function. In \cite{OkaVar}, we have proved a Varchenko type formula for the zeta function.
\subsection{Mixed cyclic covering.} 
Consider two non-negative integer vectors $\bfa=(a_1,\dots, a_n)$ and $\bfb=(b_1,\dots, b_n)$.
We say $\bfa$ is {\em strictly bigger} than $\bfb$ if  $a_j>b_j\ge 0$ for any $j=1,\dots, n$.
If this is the case, we denote it as $\bfa\gg \bfb$.
For given $\bfa, \bfb$ with $\bfa\gg \bfb$, we consider real analytic mapping  $\vphi_{\bfa,\bfb}$:
\[
\vphi_{\bfa,\bfb}:\,\BC^n\to \BC^n,\quad \vphi_{\bfa,\bfb}(\bfw)=(w_1^{a_1}{\bar w_1}^{b_1},\dots, w_n^{a_n}{\bar w_n}^{b_n}).
\]
We call $\vphi_{\bfa,\bfb}$ a {\em mixed cyclic covering mapping }
associated with
integer vectors $\bfa=(a_1,\dots, a_n)$ and $\bfb=(b_1,\dots, b_n)$.
In fact,  over $\BC^{*n}$, $\vphi_{\bfa,\bfb}:\,\BC^{*n}\to \BC^{*n}$ 
is a $\prod_{j=1}^n (a_j-b_j)$ -fold polycyclic covering.

We say that 
$\vphi_{\bfa,\bfb}$ is  {\em homogeneous } if $\bfa=(a,\dots,a)$ and $\bfb=(b,\dots,b)$
where $a,b$ are integers such that $a>b\ge 0$.
In this case, we denote $\vphi_{a,b}$ instead of $\vphi_{\bfa,\bfb}$
and we call $\vphi_{a,b}$ a homogeneous mixed covering.
For a given mixed function $f(\bfz,\bar\bfz)$, the pull-back
$g=\vphi_{\bfa,\bfb}^*(f)$
is defined by 
$$
g(\bfw,\bar \bfw)=f\circ \vphi_{\bfa,\bfb}(\bfw,\bar\bfw)=f(w_1^{a_1}\bar w_1^{b_1},
\dots, w_n^{a_n}\bar w_{n}^{b_n}).
$$
\begin{Proposition}
Let $f(\bfz, \bar \bfz)$ be a non-degenerate convenient mixed function of 
polar weighted homogeneous face type.
Let $\vphi=\vphi_{\bfa,\bfb}$ be the mixed cyclic covering associated with $\bfa=(a_1,\dots, a_n)$ and 
$\bfb=(b_1,\dots, b_n)$ as above.
Consider the pull-back $g(\bfw,\bar \bfw)=f(\vphi(\bfw,\bar \bfw))$.  Then $g(\bfw,\bar \bfw)$ is a convenient
non-degenerate mixed function of polar weighted homogeneous face type. 

If $f$ is of strongly polar positive weighted homogeneous  face type
and $\vphi=\vphi_{a,b}$ is  a homogeneous mixed covering mapping, $g$ is also of strongly polar positive weighted homogeneous
face type.
\end{Proposition}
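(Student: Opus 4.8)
The plan is to follow how the Newton data and the face functions of $f$ transform under the substitution $\bfz=\vphi_{\bfa,\bfb}(\bfw,\bar\bfw)$, and then to read off everything from the fact that $\vphi_{\bfa,\bfb}$ is a covering over the torus $\BC^{*n}$. First I would do the elementary bookkeeping: a monomial $\bfz^\nu\bar\bfz^\mu$ pulls back to $\bfw^{\nu'}\bar\bfw^{\mu'}$ with $\nu'_j=a_j\nu_j+b_j\mu_j$ and $\mu'_j=b_j\nu_j+a_j\mu_j$, so that $\nu'_j+\mu'_j=(a_j+b_j)(\nu_j+\mu_j)$ and $\nu'_j-\mu'_j=(a_j-b_j)(\nu_j-\mu_j)$. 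Since $a_j>b_j\ge 0$, one recovers $(\nu_j,\mu_j)$ from $(\nu'_j,\mu'_j)$; hence the exponent map is injective and no cancellation occurs, so $g$ is the sum of the terms $c_{\nu\mu}\bfw^{\nu'}\bar\bfw^{\mu'}$ with the same coefficients as $f$. Convenience of $g$ is then immediate, a pure monomial in $z_j,\bar z_j$ pulling back to a pure monomial in $w_j,\bar w_j$ of positive degree.

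Next I would introduce the diagonal linear automorphism $D$ of $\BR^n$ with entries $a_j+b_j>0$. The identity $\nu'_j+\mu'_j=(a_j+b_j)(\nu_j+\mu_j)$ shows that the radial exponent $\nu+\mu$ of a term of $f$ is carried to $D(\nu+\mu)$, whence $\Ga_+(g)=D(\Ga_+(f))$ and the Newton boundary transforms the same way. For a positive weight $P$ one has $\langle P,D\kappa\rangle=\langle\tilde P,\kappa\rangle$, where $\tilde P$ is the primitive weight proportional to $((a_1+b_1)p_1,\dots,(a_n+b_n)p_n)$; therefore $\De(P;g)=D\bigl(\De(\tilde P;f)\bigr)$, the dimension of a face is unchanged, and comparing which terms realise the minimal radial degree shows the face function is the pull-back $g_P=\vphi_{\bfa,\bfb}^*(f_{\tilde P})$. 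The map $P\mapsto\tilde P$ is a bijection of positive weight vectors preserving the dimension and the compactness of the associated face. With this dictionary, the ``polar weighted homogeneous face type'' claim reduces to one lemma on pull-backs: if $h$ is polar weighted homogeneous with radial weight $Q$, polar weight $R$ and polar degree $m_p$, then $\vphi_{\bfa,\bfb}^*h$ is polar weighted homogeneous, with radial weight the primitive vector proportional to $\bigl(q_j/(a_j+b_j)\bigr)_j$, polar weight proportional to $\bigl(r_j/(a_j-b_j)\bigr)_j$, and polar degree a positive rational multiple of $m_p$ (hence a positive integer when $m_p>0$); this is just the computation that $\sum_j q'_j(\nu'_j+\mu'_j)$ and $\sum_j r'_j(\nu'_j-\mu'_j)$ are constant on the pulled-back monomials. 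Applying it to $h=f_{\tilde P}$ whenever $\dim\De(P;g)=n-1$ gives that $g_P$ is polar (positive) weighted homogeneous, as needed.

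For the non-degeneracy assertion I would use that $\vphi_{\bfa,\bfb}\colon\BC^{*n}\to\BC^{*n}$ is a $\prod_j(a_j-b_j)$-fold covering, hence a local diffeomorphism, so $d\vphi_{\bfw}$ is invertible at every $\bfw\in\BC^{*n}$. Then for a relevant face weight $P$ the chain rule $d(g_P)_{\bfw}=d(f_{\tilde P})_{\vphi(\bfw)}\circ d\vphi_{\bfw}$ shows $d(g_P)_{\bfw}$ and $d(f_{\tilde P})_{\vphi(\bfw)}$ have equal rank, so $\bfw$ is a mixed critical point of $g_P$ iff $\vphi(\bfw)\in\BC^{*n}$ is a mixed critical point of $f_{\tilde P}$; non-degeneracy of $f$ rules out the latter, hence $g_P$ has no mixed critical point on $\BC^{*n}$. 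Since $P\mapsto\tilde P$ matches compact faces to compact faces this covers all the weights that enter the definition, so $g$ is non-degenerate (and the identical argument, with ``equal rank'' replaced by ``$0$ is a regular value'', transfers strong non-degeneracy). Finally, for $\vphi=\vphi_{a,b}$ homogeneous, $D=(a+b)\Id$ forces $\tilde P=P$ and $g_P=\vphi_{a,b}^*(f_P)$; a one-line computation gives $\vphi_{a,b}(t\circ\bfw)=t^{a+b}\circ\vphi_{a,b}(\bfw)$ and $\vphi_{a,b}(e^{i\eta}\circ\bfw)=e^{i(a-b)\eta}\circ\vphi_{a,b}(\bfw)$ for the radial and polar actions with weight $P$, and feeding these into the functional equations for the strongly polar positive weighted homogeneous $f_P$ yields $g_P(t\circ\bfw)=t^{(a+b)m_r}g_P(\bfw)$ and $g_P(e^{i\eta}\circ\bfw)=e^{i(a-b)m_p\eta}g_P(\bfw)$, with $(a-b)m_p$ a positive integer; so $g_P$ is strongly polar positive weighted homogeneous with the same weight $P$.

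I expect the real obstacle to be the non-degeneracy step — not the geometry, which is merely the chain rule for a covering, but pinning down which variant of ``Newton non-degenerate'' is in force and checking that the bijection $P\mapsto\tilde P$ respects it, including the faces $\De(P)$ that run off to infinity; the pull-back lemma and the homogeneous-case functional equations are routine.
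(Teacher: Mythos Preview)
Your argument is correct and is essentially the paper's own proof, run in the opposite direction: the paper starts from a face weight for $f$ and produces the corresponding face weight $\hat Q_j\propto p_j/(a_j+b_j)$ for $g$, while you start from a weight $P$ for $g$ and pass to $\tilde P_j\propto (a_j+b_j)p_j$ for $f$; the monomial identities $\nu'_j\pm\mu'_j=(a_j\pm b_j)(\nu_j\pm\mu_j)$, the transfer of radial and polar weights by the factors $a_j+b_j$ and $a_j-b_j$, and the covering/chain-rule argument for non-degeneracy are identical. Your explicit Newton-boundary description $\Ga_+(g)=D(\Ga_+(f))$ and the remark that no monomial cancellation occurs are useful additions not spelled out in the paper, but they do not change the route.
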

\begin{proof}
Let $P$ be a weight vector and consider $f_P(\bfz,\bar\bfz)$.
It is a radially weighted homogeneous polynomial under the weight $P$. Let 
$R=(r_1,\dots, r_n)$ be the polar weight of $f_P$. Let 
$d_r$  and $d_p$ be the  radial and polar degree of $f_P$.
 We consider the normalized  weight $Q=(q_1,\dots, q_n)\in \BQ^n$
 and $S=(s_1,\dots, s_n)\in \BQ^n$ where 
$q_j=p_j/d_r$ and $s_j=r_j/d_p$.  
We consider also the  normalized weights $\hat Q=(\hat q_1,\dots, \hat q_n)$
and $\hat S=(\hat s_1,\dots, \hat s_n)$ where 
\[
\hat q_j=q_j/(a_j+b_j),\,\,\hat s_j=s_j/(a_j-b_j),\quad j=1,\dots, n.
\]
Consider a monomial $M=z_1^{m_1}\bar z_1^{\ell_1}\dots z_n^{m_n}\bar
 z_n^{\ell_n}$ in $f_P$, {\it i.e.} 
$\deg_QM=1,\,\pdeg_S M=1$.
Consider the pull-back of $M$,
\[
M'=\vphi^*M=\prod_{j=1}^n (w_j^{a_j}\bar w_j^{b_j})^{m_j} 
(\bar w_j^{a_j} w_j^{b_j})^{\ell_j} 
\]
Then by an easy calculation, we have
\begin{eqnarray*}
\deg_{\hat Q} M'=\sum_{j=1}^n q_j(m_j+\ell_j)=\deg_Q M=1\\
\pdeg_{\hat S} M'=\sum_{j=1}^n s_j(m_j-\ell_j)=\pdeg_S M=1
\end{eqnarray*}
This implies that  $\vphi^* f_P=g_{\hat Q}$ is a  radially weighted homogeneous
 polynomial
 by the normalized weight  vector
$\hat Q$ and $\vphi^* f_P$ is a polar weighted homogeneous polynomial
by the normal weight vector  $\hat S$. 
Non-degeneracy is the result of the commutative diagram:
\[
\begin{matrix}
\BC^{*n}&\mapright{\vphi}&\BC^{*n}\\
\mapdown{g_{\hat Q}}&&\mapdown{f_Q}\\
\BC&=&\BC
\end{matrix}
\]
We observe that if $\vphi=\vphi_{a,b}$ and $p_j=r_j$,  
\[\hat s_j(a-b)d_p=r_j=p_j=\hat q_j(a+b) d_r.
\]
which implies that $\vphi^*f_P$ is strongly polar weighted homogeneous.
\end{proof}
As holomorphic functions are obviously 
 mixed functions of strongly polar weighted homogeneous face type, we have:
\begin{Corollary}
Assume that $f(\bfz)$ is a convenient non-degenerate holomorphic function
and   $g(\bfw,\bar\bfw)=\vphi^*f(\bfw,\bar\bfw)$
 with $\vphi=\vphi_{\bfa,\bfb}$.
Then $g(\bfw,\bar\bfw)$ is a convenient non-degenerate mixed function of polar weighted homogeneous face  type.
If further $\vphi=\vphi_{a,b}$, homogeneous with $a>b\ge 0$, $g$ is of strongly polar positive  weighted homogeneous face type.
\end{Corollary}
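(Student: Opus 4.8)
The plan is to read the corollary off the two propositions that immediately precede it, so almost all of the work has already been done. First I would check that a convenient non-degenerate holomorphic function $f(\bfz)$ is a legitimate input for the preceding proposition, that is, that it is a convenient non-degenerate mixed function of polar weighted homogeneous face type. Conveniency is literally the same requirement in the holomorphic and the mixed settings; the Proposition stating that a holomorphic function is of strongly polar positive weighted homogeneous face type already supplies, a fortiori, the ``polar weighted homogeneous face type'' hypothesis; and the mixed non-degeneracy of each face function $f_P$ — non-vanishing of $\bigtriangledown f_P=(\overline{\partial f_P/\partial z_1},\dots,\overline{\partial f_P/\partial z_n})$ on $\BC^{*n}$ — coincides for a holomorphic $f_P$ with the classical (Newton) non-degeneracy, since there $\partial f_P/\partial\bar z_j\equiv 0$ and $\bigtriangledown f_P$ vanishes exactly at the critical points of $f_P$. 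Thus the classical non-degeneracy of $f$ is precisely the mixed non-degeneracy needed; this equivalence is recorded in \cite{OkaMix}.

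With the hypothesis in place, the second step is simply to apply the preceding proposition to $f$ and $\vphi=\vphi_{\bfa,\bfb}$: it gives at once that $g=\vphi_{\bfa,\bfb}^*f$ is a convenient non-degenerate mixed function of polar weighted homogeneous face type, which is the first assertion. The one point not formally contained in that proposition is the conveniency of $g$, and this is the elementary exponent computation: a monomial $z_j^{c_j}$ lying on $\Ga(f)$ pulls back to $w_j^{a_jc_j}\bar w_j^{b_jc_j}$, whose radial exponent $(a_j+b_j)c_j\,e_j$ again lies on the $j$-th coordinate axis, so $\Ga(g)$ meets every axis. For the second assertion, when $\vphi=\vphi_{a,b}$ is homogeneous I would invoke the ``strongly polar positive'' half of the preceding proposition: since $f$, being holomorphic, is of strongly polar positive weighted homogeneous face type and $\vphi=\vphi_{a,b}$ has $\bfa=(a,\dots,a)$, $\bfb=(b,\dots,b)$, that half yields directly that $g$ is again of strongly polar positive weighted homogeneous face type.

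I do not expect any genuine obstacle here: the corollary is purely a repackaging of the previous proposition for the special case of a holomorphic $f$. The only two steps that deserve an explicit sentence are the ones isolated above — that the classical non-degeneracy of the holomorphic $f$ is exactly the mixed non-degeneracy required as input, and that conveniency is preserved under the pull-back $\vphi_{\bfa,\bfb}^*$ — and both are immediate.
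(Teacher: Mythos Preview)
Your proposal is correct and follows essentially the same route as the paper: the paper simply remarks that holomorphic functions are obviously mixed functions of strongly polar weighted homogeneous face type (the earlier Proposition) and then reads off the Corollary from the preceding Proposition on mixed cyclic coverings. One minor point: the conveniency of $g$ is actually part of the conclusion of that preceding Proposition, so your separate verification is redundant (though harmless, and arguably filling a gap in the Proposition's own proof).
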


\section{Contact structure}
\subsection{Contact  structure and  a contact submanifold of  a sphere}\label{contact-natural}
Let $M$ be a smooth oriented manifold of dimension $2n-1$.
A  {\em contact structure on $M$ } is a hyperplane distribution $\xi$ in the tangent bundle $TM$
($M\ni p\mapsto \xi(p)\subset T_pM$)
which is induced by a global  1-form $\al$ 
by
$\xi(p)=\Ker\,\al$ such that 
$\al\land(d\al)^{n-1}$ is nowhere vanishing $(2n-1)$ form. We say 
$\al$ is  {\em positive} if $\al\land (d\al)^{n-1}$ is a positive form.


We consider the radius function 
$\rho(\bfz,\bar\bfz)=z_1\bar z_1+\cdots+z_n\bar z_n$. The level manifold 
$\rho\inv(r^2)$ is nothing but the sphere ${\BS}_r$.
On ${\BS}_r$, we consider the canonical contact structure $\xi$ defined by the contact form
$\al:=\,-d^c\rho=\,-d\rho\circ J$
where $J$ is the complex structure.
More explicitly,
\[\al=\sum_{j=1}^n -i(\bar z_j dz_j-z_j d\bar z_j)=2\sum_{j=1}^n(x_jdy_j-y_jdx_j) .
\]
$\xi(\bfz)$ is nothing but the complex
 hyperplane which is hermitian orthogonal to $\bfz$:
$\xi(\bfz)=\{\bfv\,|\, (\bfv,\bfz)=0\}$. 

Let $\omega=d\al=-dd^c\vphi$.
Then $\omega$ is explicitly written as
\[
\omega(\bfz)=2 i\sum_{j=1}^n dz_j\land \bar dz_j=\,4\sum_{j=1}^n dx_j\land dy_j
\]
and $\omega$ defines a symplectic structure on $\xi$.
We have a canonical equality (\cite{C-N-PP}):
\begin{eqnarray}\label{omega-formula}
 4\,   \Re(\bfu,\bfv)=\omega(\bfu,J\bfv),\quad \bfu,\bfv\in T{\BS}_r.\end{eqnarray}
The {\em  Reeb vector field} $R\in \Ga({\BS}_r,T{\BS}_r)$ is defined by 
the property: 
\[
\al(R)=1, \quad
\iota_R(\omega)=0.
\]
Here $\iota_R$ is the inner derivative by $R$. In our case, 
\begin{eqnarray*}
R(\bfz)&=&\frac{ i\bfz}{2 \rho(\bfz)},\,\,\qquad
\text{or as a tangent vector}\qquad\\
&=&\frac i{2 \rho(\bfz)}\sum_{j=1}^n (z_j\frac{\partial}{\partial z_j}-
\bar z_j\frac{\partial}{\partial\bar z_j})=\frac{1}{2r^2}
\sum_{j=1}^n(x_j\frac{\partial}{\partial y_j}
-y_j\frac{\partial}{\partial x_j}).
\end{eqnarray*}

We consider a real codimension two submanifold $K\subset {\BS}_r$.
We say $K$ is {\em a (positive) contact submanifold of ${\BS}_r$} if 
the restriction
$\al_{|K}$ defines a contact submanifold,
{\em i.e.} $(2n-3)$-form $\al\land(d\al)^{n-2}$ is nowhere vanishing form (respectively 
positive form) of $K$.
\subsection{Remarks on the orientation} The orientation of ${\BS}_r$ is given as follows.
A $(2n-1)$-form $\Omega$ is positive if and only if 
$d\rho\land \Omega$ is a positive form of $\BC^n$.
Thus $\al\land (d\al)^{n-1}$ is positive.
Let $f(\bfz,\bar\bfz)=g(\bfz,\bar\bfz)+i\,h(\bfz,\bar\bfz)$ be a non-degenerate mixed function with an
 isolated mixed singularity at the origin.
Let $K_r=f\inv(0)\cap {\BS}_r$ with a sufficiently small $r$.
The orientation of $K_r$ is given by an $(2n-3)$ form $\Omega'$
such that 
$d\rho\land \Omega'\land dg\land dh$ is a positive form of $\BC^n$.

\subsection{Contact structure on  mixed links.}
First we prepare a lemma.
Put $f(\bfz,\bar\bfz)=g(\bfz,\bar\bfz)+ i\,h(\bfz,\bar\bfz)$, where $g,h$ are real valued mixed functions. We use  hereafter the following  notation for simplicity .
\[f_{z_j}=\frac{\partial f}{\partial z_j},\qquad f_{\bar z_j}=\frac{\partial f}{\partial \bar z_j}.
\]
\begin{Lemma}\label{keylemma}
{\rm (1)} $d\rho\land \al$ is given as follows.
\[
d\rho\land \al =i\sum_{a,b=1}^n  A_{a,\bar b}\, dz_a\land d\bar z_b,
\quad
A_{a,\bar b}=2\bar z_a z_b.
\]
{\rm (2)} The two form $dg\land dh$ can be written as follows.
\[
dg\land dh =i\, \sum_{a,b=1}^n B_{a,\bar b}dz_a\land d\bar z_{b}+R
\]
where
\[
B_{a,\bar b}=\frac 12 \left(
f_{z_a} \overline{f_{z_b}} - \overline{f_{\bar z_ a}}  f_{\bar z_b}
\right)
\]
 $R$ is a linear combination of two forms
$dz_{a}\land dz_b$ and $d\bar z_a\land d\bar z_b$. 
\end{Lemma}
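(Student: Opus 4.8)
The plan is to compute each quantity by brute-force expansion in the complex differentials $dz_j, d\bar z_j$ and then collect terms. First I would write $d\rho = \sum_j (\bar z_j\, dz_j + z_j\, d\bar z_j)$ from $\rho = \sum_j z_j\bar z_j$, and recall the explicit formula $\al = -i\sum_j(\bar z_j\, dz_j - z_j\, d\bar z_j)$ already established in the excerpt. Taking the wedge $d\rho\land\al$, the pure terms $dz_a\land dz_b$ and $d\bar z_a\land d\bar z_b$ must cancel because $\rho$ and (essentially) $\al$ are both built from the same hermitian pairing; concretely, the coefficient of $dz_a\land dz_b$ from $d\rho\land\al$ is a sum of $\bar z_a\bar z_b$-type terms that is symmetric in $a,b$ and gets wedged with an antisymmetric $dz_a\land dz_b$, hence vanishes. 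What survives is the mixed part: the coefficient of $dz_a\land d\bar z_b$ is $(\bar z_a)(z_b)\cdot(i) - (\text{from the other ordering})$, and after bookkeeping one gets $d\rho\land\al = i\sum_{a,b} A_{a,\bar b}\, dz_a\land d\bar z_b$ with $A_{a,\bar b} = 2\bar z_a z_b$. This is part (1); it is a short direct computation.

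For part (2), I would start from $dg = \sum_j(g_{z_j}\, dz_j + g_{\bar z_j}\, d\bar z_j)$ and similarly for $dh$, then use $g = (f+\bar f)/2$ and $h = (f - \bar f)/(2i)$ to re-express everything in terms of $f_{z_j}, f_{\bar z_j}$ and their conjugates, noting $\overline{(\partial f/\partial z_j)}(\bar f) = \partial\bar f/\partial\bar z_j$ etc. Wedging $dg\land dh$ produces three kinds of terms: $dz_a\land dz_b$, $d\bar z_a\land d\bar z_b$, and the mixed $dz_a\land d\bar z_b$. The first two kinds are lumped into the remainder $R$ exactly as the statement allows, so I only need to track the mixed coefficient. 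Collecting the coefficient of $dz_a\land d\bar z_b$ (being careful that it receives contributions from both the $dz_a$-in-$dg$/$d\bar z_b$-in-$dh$ pairing and the $d\bar z_b$-in-$dg$/$dz_a$-in-$dh$ pairing, with a sign flip on reordering) gives, after substituting $g_{z_a} = (f_{z_a}+\overline{f_{\bar z_a}})/2$ and so on, the expression $B_{a,\bar b} = \tfrac12(f_{z_a}\overline{f_{z_b}} - \overline{f_{\bar z_a}}f_{\bar z_b})$, with an overall factor $i$ emerging from the $1/(2i)$ in $h$.

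The only real obstacle here is the sign and index bookkeeping: one must be consistent about the ordering convention $dz_a\land d\bar z_b$ (as opposed to $d\bar z_b\land dz_a$) and about which cross-terms in the wedge of the two one-forms contribute to a given two-form, since a careless treatment will flip a sign or symmetrize when it should antisymmetrize. A clean way to organize this is to write $dg\land dh$ as the $2$-form whose $(dz_a, d\bar z_b)$-component is $g_{z_a}h_{\bar z_b} - g_{\bar z_b}h_{z_a}$ and then simplify that scalar expression directly; expanding $g,h$ in terms of $f$ collapses the four-term product into the stated two-term difference because the cross terms $f_{z_a}f_{\bar z_b}$ and $\overline{f_{\bar z_a}}\,\overline{f_{z_b}}$ cancel. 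Once the mixed coefficients are verified, the assertion that everything else is a combination of $dz_a\land dz_b$ and $d\bar z_a\land d\bar z_b$ is immediate from the fact that a wedge of two one-forms in the $dz$'s and $d\bar z$'s can only produce these three types of $2$-forms, so nothing further needs to be said about $R$.
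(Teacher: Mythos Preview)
Your proposal is correct and follows essentially the same approach as the paper: for (1) you expand $d\rho$ and $\al$ and note the symmetric/antisymmetric cancellation of the pure terms, and for (2) you substitute $g=\tfrac12(f+\bar f)$, $h=\tfrac{-i}{2}(f-\bar f)$ into $dg$, $dh$ and extract the coefficient of $dz_a\land d\bar z_b$, exactly as the paper does. Your organization via $g_{z_a}h_{\bar z_b}-g_{\bar z_b}h_{z_a}$ and the explicit observation that the cross terms $f_{z_a}f_{\bar z_b}$ and $\overline{f_{\bar z_a}}\,\overline{f_{z_b}}$ cancel is a clean way to bookkeep what the paper leaves as ``a simple calculation.''
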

\begin{proof}
The assertion (1)  is a result of  a simple calculation:
\begin{eqnarray*}
d\rho\land \al=\left(\sum_{j=1}^n \left(z_jd\bar z_j+\bar z_j dz_j\right)\right)
\land 
\left(i\sum_{k=1}^n z_kd\bar z_k-\bar z_k dz_k\right).
\end{eqnarray*}
For (2), we use the equality 
\[g=\frac12 (f+\bar f), \quad h=\frac {-i}2 (f-\bar f).
\]
Thus we have
\begin{eqnarray*}
dg=\frac 12\sum_{j=1}^n \left\{(f_{z_j}+\bar f_{z_j})dz_j+(f_{\bar z_j}+\bar f_{\bar z_j})d\bar z_j
\right\}\\
dh=\frac{-i}2 \sum_{j=1}^n\left\{
(f_{z_j}-\bar f_{z_j})dz_j+(f_{\bar z_j}-\bar f_{\bar z_j})d\bar z_j)
\right\}
\end{eqnarray*}
As $\bar f_{\bar z_j}=\overline{f_{z_j}}$ and $\bar f_{z_j}=\overline{f_{\bar z_j}}$, the assertion follows by a simple calculation.
\end{proof}
\begin{Corollary} \label{4-form}
The four form
$d\rho\land \al\land dg\land dh$ is given as follows.
\begin{eqnarray*}\label{keycorollary}
&d\rho\land \al\land dg\land dh=-\sum_{a,b=1}^n C_{a,b} dz_a\land d\bar z_a\land dz_b\land
d \bar z_b+S\\
&C_{a,b}=|\bar z_a {f_{z_b}}-\bar z_b { f_{z_a}}|^2 -|z_a f_{\bar z_b}-z_b f_{\bar z_a}|^2
\end{eqnarray*}
where $S$ is a linear combination of other type of four forms.
\end{Corollary}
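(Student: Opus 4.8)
The plan is to compute the four-form $d\rho\wedge\alpha\wedge dg\wedge dh$ by multiplying together the two two-forms obtained in Lemma \ref{keylemma}. Write $d\rho\wedge\alpha = i\sum_{a,b} A_{a,\bar b}\,dz_a\wedge d\bar z_b$ with $A_{a,\bar b}=2\bar z_a z_b$, and $dg\wedge dh = i\sum_{c,d} B_{c,\bar d}\,dz_c\wedge d\bar z_d + R$ with $B_{c,\bar d}=\tfrac12(f_{z_c}\overline{f_{z_d}}-\overline{f_{\bar z_c}}f_{\bar z_d})$, where $R$ involves only terms $dz_c\wedge dz_d$ and $d\bar z_c\wedge d\bar z_d$. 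First I would argue that the product of the two ``diagonal'' pieces $i\sum A_{a,\bar b}dz_a\wedge d\bar z_b$ and $i\sum B_{c,\bar d}dz_c\wedge d\bar z_d$ produces, among others, exactly the forms $dz_a\wedge d\bar z_a\wedge dz_b\wedge d\bar z_b$, while the cross terms (one diagonal factor times $R$) contribute only to $S$, the ``other type'' of four-form. So the coefficient of $dz_a\wedge d\bar z_a\wedge dz_b\wedge d\bar z_b$ comes entirely from $-\big(\sum_{a,b}A_{a,\bar b}dz_a\wedge d\bar z_b\big)\wedge\big(\sum_{c,d}B_{c,\bar d}dz_c\wedge d\bar z_d\big)$.

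Next I would extract that coefficient. A term $A_{a,\bar b}dz_a\wedge d\bar z_b$ wedged with $B_{c,\bar d}dz_c\wedge d\bar z_d$ can only land on the monomial $dz_a\wedge d\bar z_a\wedge dz_b\wedge d\bar z_b$ (for $a\neq b$) when $\{a,b\}=\{c,d\}$ appropriately, i.e. either $(c,d)=(b,a)$ or $(c,d)=(a,b)$ — no wait, more carefully: we need the four differentials $dz_a,d\bar z_b,dz_c,d\bar z_d$ to be a permutation of $dz_a,d\bar z_a,dz_b,d\bar z_b$, which forces $c=b,d=a$ (giving $dz_a\wedge d\bar z_b\wedge dz_b\wedge d\bar z_a$) or $c=a,d=b$ only when one already has a repeat — so the genuine contributions are indexed by ordered pairs $(a,b)$ and $(b,a)$. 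Collecting the two orderings of an unordered pair $\{a,b\}$ and reordering the wedge factors to the standard order $dz_a\wedge d\bar z_a\wedge dz_b\wedge d\bar z_b$ (tracking the sign of each transposition), and also handling the ``diagonal'' index $a=b$, one gets the coefficient as a combination of $A_{a,\bar b}B_{b,\bar a}$, $A_{b,\bar a}B_{a,\bar b}$, $A_{a,\bar a}B_{b,\bar b}$ and $A_{b,\bar b}B_{a,\bar a}$.

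The final step is the algebraic simplification: substituting $A_{a,\bar b}=2\bar z_a z_b$ and the formula for $B$, the combination collapses, after expanding, to
\[
|\bar z_a f_{z_b}-\bar z_b f_{z_a}|^2 - |z_a f_{\bar z_b}-z_b f_{\bar z_a}|^2,
\]
which is $C_{a,b}$; the overall sign $-1$ in front then matches the statement. Concretely one checks that the ``holomorphic-derivative'' part of $B$, namely $\tfrac12 f_{z_c}\overline{f_{z_d}}$, contributes $+|\bar z_a f_{z_b}-\bar z_b f_{z_a}|^2$ after symmetrization in $(a,b)$, and the ``antiholomorphic'' part $-\tfrac12\overline{f_{\bar z_c}}f_{\bar z_d}$ contributes $-|z_a f_{\bar z_b}-z_b f_{\bar z_a}|^2$, using $\overline{\bar z_a z_b}=z_a\bar z_b$ and regrouping. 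I expect the bookkeeping of wedge-product signs when reordering $dz_a\wedge d\bar z_b\wedge dz_c\wedge d\bar z_d$ into the canonical form — together with correctly merging the $(a,b)$ and $(b,a)$ orderings and the diagonal $a=b$ terms — to be the main obstacle; it is purely mechanical but error-prone, and getting the relative sign between the two squared terms in $C_{a,b}$ right depends on doing it carefully. Everything else is a routine substitution.
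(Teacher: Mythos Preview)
Your approach is essentially the same as the paper's: multiply the two two-forms from Lemma~\ref{keylemma}, observe that the residual term $R$ cannot contribute to monomials of type $dz_a\wedge d\bar z_a\wedge dz_b\wedge d\bar z_b$, extract the coefficient as the signed combination $A_{a,\bar a}B_{b,\bar b}+A_{b,\bar b}B_{a,\bar a}-A_{a,\bar b}B_{b,\bar a}-A_{b,\bar a}B_{a,\bar b}$, and then substitute and simplify to the stated $C_{a,b}$. The paper carries out exactly this computation, so your plan is correct and matches it; just be sure to pin down the signs cleanly rather than leaving them as ``a combination.''
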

\begin{proof}
Write 
\[
d\rho\land \al\land dg\land dh=-\sum_{a,b=1}^n C_{a,b} dz_a\land d\bar z_a\land dz_b\land
d \bar z_b+S.
\]
Then by Lemma \ref{keylemma}, we have
\begin{eqnarray*}
C_{a,b}&= A_{a,\bar a} B_{b,\bar b} +A_{b,\bar b} B_{a,\bar a}- A_{a,\bar b} B_{b,\bar a}
-A_{b,\bar a} B_{a,\bar b}\qquad\qquad\qquad\\
&=|z_a|^2(|f_{z_b}|^2-|f_{\bar b}|^2)+|z_b|^2(|f_{z_a}|^2-|f_{\bar z_a}|^2)\qquad\qquad\qquad\\
&-2 \bar z_a z_b(f_{z_b} \overline{f_{ z_a}} - \overline{f_{\bar z_b}}f_{\bar a})
- 2 \bar z_b z_a (f_{z_a} \overline{f_{ z_b}} - \overline{f_{\bar z_a}}f_{\bar b})\\
&=(z_a\overline{f_b}-z_b \overline{f_a})(\bar z_a f_b-\bar z_b f_a)
-(z_b\overline{f_a}-z_a \overline{f_b})(\bar z_b f_a-\bar z_a f_b)\\
&=|\bar z_a {f_{z_b}}-\bar z_b{ f_{z_a}}|^2 -|z_a f_{\bar z_b}-z_b f_{\bar z_a}|^2.\qquad\qquad\qquad\qquad
\end{eqnarray*}
\end{proof}
Define $C(\bfz,\bar\bfz):=\sum_{1\le a< b\le n} C_{a,b}(\bfz,\bar\bfz)$.
By Corollary \ref{4-form} and an easy computation gives the following.
\begin{Corollary}\label{contact-form}
We have 
\begin{eqnarray*}
&&d\rho\land\al\land d\al^{n-2}\land dg\land
   dh(\bfz,\bar\bfz)\\
&=& i^n 2^{n-2}(n-2)!C(\bfz,\bar\bfz) dz_1\land 
\bar z_1\land\cdots\land dz_n\land d \bar z_n\\
&=&4^{n-1} (n-2)!\,C(\bfz,\bar\bfz) dx_1\land dy_1\land\dots \land dx_n\land dy_n.
\end{eqnarray*}
\end{Corollary}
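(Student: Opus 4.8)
The plan is to wedge the four--form of Corollary~\ref{4-form} with $(d\al)^{n-2}$ and to observe that, among all the summands, only the ``pair--diagonal'' ones $dz_a\land d\bar z_a\land dz_b\land d\bar z_b$ (with $a\ne b$) can possibly contribute to a top form. Since $(d\al)^{n-2}$ has even degree $2n-4$ it is central, so
\[
d\rho\land\al\land (d\al)^{n-2}\land dg\land dh=\bigl(d\rho\land\al\land dg\land dh\bigr)\land(d\al)^{n-2},
\]
and Corollary~\ref{4-form} describes the bracketed factor.

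Two elementary facts drive the argument. First, by Lemma~\ref{keylemma}, $d\rho\land\al$ is of type $(1,1)$, while $dg\land dh$ is the sum of a $(1,1)$--form and the form $R$ of type $(2,0)+(0,2)$; hence $d\rho\land\al\land dg\land dh$ lies in bidegrees $(2,2)$, $(3,1)$ and $(1,3)$ only. Second, since $d\al=2i\sum_{j=1}^n dz_j\land d\bar z_j$ and $(dz_j\land d\bar z_j)^2=0$, the multinomial theorem gives
\[
(d\al)^{n-2}=(2i)^{n-2}(n-2)!\sum_{|I|=n-2}\ \bigwedge_{j\in I}\bigl(dz_j\land d\bar z_j\bigr),
\]
summed over the $(n-2)$--element subsets $I\subset\{1,\dots,n\}$, and the two--forms $dz_j\land d\bar z_j$ commute with one another.

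Then I would carry out the wedge and extract the top form. The $(3,1)$-- and $(1,3)$--parts of $d\rho\land\al\land dg\land dh$, wedged with the $(n-2,n-2)$--form $(d\al)^{n-2}$, yield forms of bidegree $(n+1,n-1)$ and $(n-1,n+1)$, which vanish on $\BC^n$; so $R$ contributes nothing. A $(2,2)$--monomial $dz_a\land d\bar z_b\land dz_c\land d\bar z_d$ (with $a\ne c$, $b\ne d$) wedged with $\bigwedge_{j\in I}(dz_j\land d\bar z_j)$ can equal $dz_1\land d\bar z_1\land\cdots\land dz_n\land d\bar z_n$ only if $\{a,c\}$ and $\{b,d\}$ are both disjoint from $I$; as $|I|=n-2$ this forces $\{a,c\}=\{b,d\}$, i.e. the monomial is pair--diagonal. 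Hence only the pair--diagonal part of the $(2,2)$--part survives, and by Corollary~\ref{4-form} it contributes, for each $a<b$, the coefficient $-C_{a,b}$ to $dz_a\land d\bar z_a\land dz_b\land d\bar z_b$. For a fixed pair $a<b$ only the term of $(d\al)^{n-2}$ indexed by $I=\{1,\dots,n\}\setminus\{a,b\}$ survives the wedge (all others repeat an index), contributing the factor $(2i)^{n-2}(n-2)!$, and reassembling the commuting two--forms $dz_j\land d\bar z_j$ into $dz_1\land d\bar z_1\land\cdots\land dz_n\land d\bar z_n$ costs no sign. Summing over $a<b$ and using $C(\bfz,\bar\bfz)=\sum_{1\le a<b\le n}C_{a,b}$ gives the top--form coefficient $-(2i)^{n-2}(n-2)!\,C(\bfz,\bar\bfz)=i^n2^{n-2}(n-2)!\,C(\bfz,\bar\bfz)$, since $-i^{n-2}=i^n$; this is the first displayed equality. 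Finally, substituting $dz_j\land d\bar z_j=-2i\,dx_j\land dy_j$ introduces the factor $(-2i)^n$, and $i^n2^{n-2}(-2i)^n=4^{n-1}$, which yields the second equality.

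The one non--routine point is the index count showing that $R$ and the off--diagonal $(2,2)$--monomials cannot feed the top form; the rest is the multinomial expansion of $(d\al)^{n-2}$ together with keeping track of the constant $i^n2^{n-2}(n-2)!$. I do not expect a real obstacle here, only the need to stay careful with bidegrees so that the unwanted terms are seen to drop out.
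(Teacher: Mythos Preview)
Your proof is correct and is exactly the ``easy computation'' the paper alludes to but does not write out: wedge the four--form of Corollary~\ref{4-form} with the multinomial expansion of $(d\al)^{n-2}$, use a bidegree count to kill $S$ and the off--diagonal $(2,2)$--monomials, and track the constant. There is nothing to add; your bookkeeping of the factor $-(2i)^{n-2}=i^n2^{n-2}$ and of $(-2i)^n$ in the passage to real coordinates is accurate.
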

\subsection{Holomorphic-like mixed function.}
Let $U$ be an open neighborhood of the origin.
A mixed function  $f(\bfz,\bar\bfz)$  which is defined on $U$ with an isolated  mixed singularity at the origin is called
{\em holomorphic-like} (respectively {\em anti-holomorphic-like})
 if 
for any $\bfz\in f\inv(0)\cap U$, 
\begin{eqnarray}\label{ineq}
C(\bfz,\bar \bfz)&=&\sum_{1\le a<b\le n}C_{a,b}\ge 0\\
C_{a,b}&=&|z_a\overline{f_{z_b}}-z_b\overline{ f_{z_a}}|^2 -|z_a f_{\bar z_b}-z_b f_{\bar z_a}|^2
\end{eqnarray}
( or respectively 
$C(\bfz,\bar \bfz)=\sum_{1\le a<b\le n}C_{a,b}\le 0$).

We say that  $f(\bfz,\bar\bfz)$ is {\em strictly holomorphic-like}
(resp. {\em strictly  anti-holomorphic-like }) in $U$
if  $C(\bfz,\bar\bfz)>0$ (resp. $C(\bfz,\bar\bfz)<0$)
on any smooth point  $\bfz\in U\cap f\inv(0)\setminus\{\bf0\}$.

\noindent{\bf Remark.}
If $f$ is a holomorphic function, $f_{\bar z_j}=0$ and 
$C_{a,b}\ge 0$ for any $1\le a<b\le n$. Thus
$f(\bfz)$ is obviously (strictly) holomorphic-like.
If $f(\bar \bfz)$ is an anti-holomorphic function,
$f_{z_j}=0$ and 
$f(\bfz)$ is  anti-holomorphic-like.


\begin{Lemma}
Assume that $f(\bfz)$ is a 
 holomorphic 
function and let 
$g(\bfw,\bar\bfw)=\vphi^*f(\bfw,\bar\bfw)=f(\vphi(\bfw,\bar \bfw))$ where $\vphi$ is a mixed homogeneous  cyclic covering associated 
with integers $a>b\ge 0$ (respectively $0\le a<b$).
Then $g$ is a holomorphic-like (resp. anti-holomorphic-like)
mixed function 
in a neighborhood of the origin.
\end{Lemma}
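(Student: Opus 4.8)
The plan is to compute the quantity $C(\bfw,\bar\bfw)$ for $g=\vphi^*f$ directly in terms of the partial derivatives of $f$, using the chain rule for the mixed covering $\vphi=\vphi_{a,b}$, and to show that the resulting expression is manifestly a sum of squares times a nonnegative factor depending on the exponents $a,b$.

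First I would write out the chain rule. Setting $z_j=\vphi_j(\bfw,\bar\bfw)=w_j^a\bar w_j^b$, we have $\partial z_j/\partial w_j = a w_j^{a-1}\bar w_j^b = a z_j/w_j$ and $\partial z_j/\partial\bar w_j = b w_j^a\bar w_j^{b-1}=b z_j/\bar w_j$ (on $\BC^{*n}$; the coordinate hyperplanes are handled separately since $\vphi$ preserves them and on $\{w_j=0\}$ the relevant derivatives of $g$ simply vanish). Since $f$ is holomorphic, $f_{\bar z_j}=0$, so
\[
g_{w_j} = f_{z_j}\,\frac{\partial z_j}{\partial w_j} = a\,\frac{z_j}{w_j}\,f_{z_j},\qquad
g_{\bar w_j} = f_{z_j}\,\frac{\partial z_j}{\partial \bar w_j} = b\,\frac{z_j}{\bar w_j}\,f_{z_j}.
\]
Now I substitute these into the formula
\[
C_{a,b}^{(g)} = |w_a\overline{g_{w_b}} - w_b\overline{g_{w_a}}|^2 - |w_a g_{\bar w_b} - w_b g_{\bar w_a}|^2
\]
from the definition of holomorphic-like (with the index names of the covering exponents $a,b$ renamed to avoid clash — I will use $a<b$ as summation indices $\alpha<\beta$ in the writeup). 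For the first term, $w_\alpha\overline{g_{w_\beta}} - w_\beta\overline{g_{w_\alpha}} = a\big(w_\alpha \overline{(z_\beta/w_\beta)f_{z_\beta}} - w_\beta\overline{(z_\alpha/w_\alpha)f_{z_\alpha}}\big)$; for the second term, $w_\alpha g_{\bar w_\beta} - w_\beta g_{\bar w_\alpha} = b\big(w_\alpha(z_\beta/\bar w_\beta)f_{z_\beta} - w_\beta(z_\alpha/\bar w_\alpha)f_{z_\alpha}\big)$. The key algebraic observation I expect is that, after pulling out the scalars $a$ and $b$ and using $|w_j|^2 = w_j\bar w_j$, both bracketed quantities have the same modulus up to the factor coming from $w_j$ versus $\bar w_j$ in the denominators: more precisely I anticipate an identity of the form $|w_\alpha(z_\beta/w_\beta)f_{z_\beta} - w_\beta(z_\alpha/w_\alpha)f_{z_\alpha}|^2 = |w_\alpha(z_\beta/\bar w_\beta)f_{z_\beta}-w_\beta(z_\alpha/\bar w_\alpha)f_{z_\alpha}|^2$, since replacing $w_j$ by $\bar w_j$ in a denominator multiplies each term by the unit complex number $\bar w_j/w_j$ and — because both terms involve the same pair $(\alpha,\beta)$ — one checks the cross terms transform compatibly. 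Granting this, $C_{\alpha,\beta}^{(g)} = (a^2 - b^2)\,|\,\cdots\,|^2 \ge 0$ because $a>b\ge 0$, and summing over $\alpha<\beta$ gives $C^{(g)}\ge 0$, i.e. $g$ is holomorphic-like. For the case $0\le a<b$ the factor is $a^2-b^2<0$ and we get anti-holomorphic-like.

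The main obstacle will be verifying cleanly that the two moduli in the putative identity above are genuinely equal — the naive term-by-term phase argument needs the cross term $2\Re\big(w_\alpha\bar w_\beta (z_\beta/w_\beta)\overline{(z_\alpha/w_\alpha)} f_{z_\beta}\overline{f_{z_\alpha}}\big)$ to be invariant under $w_j\mapsto\bar w_j$ in the denominators, and one should double-check this is not merely wishful. A safe way to organize it: factor $z_j = w_j^a\bar w_j^b$ out of each slot so that $w_\alpha(z_\beta/w_\beta)f_{z_\beta} = w_\alpha w_\beta^{a-1}\bar w_\beta^b f_{z_\beta}$ while $w_\alpha(z_\beta/\bar w_\beta)f_{z_\beta} = w_\alpha w_\beta^a \bar w_\beta^{b-1} f_{z_\beta}$; writing each of the four monomials in the two brackets in this explicit form, one sees the second bracket is obtained from the first by the global substitution $w_j\leftrightarrow$ (its conjugate phase) applied uniformly, hence the moduli agree. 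I would also remark that the coordinate hyperplanes $\{w_j=0\}$ cause no trouble: there the corresponding factor $z_j$ vanishes to positive order, so all the relevant $g_{w_k}, g_{\bar w_k}$ entering $C_{\alpha,\beta}^{(g)}$ either vanish or reduce to the lower-dimensional case, and the inequality passes to the closure by continuity. This establishes that $g$ is (strictly, away from the singular locus, when $f$ has an isolated singularity) holomorphic-like in a neighborhood of the origin.
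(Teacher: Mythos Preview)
Your strategy is the same as the paper's---compute $g_{w_j},g_{\bar w_j}$ by the chain rule and substitute into $C_{j,k}$---and the target formula $C_{j,k}=(a^2-b^2)\lvert\cdots\rvert^2$ is exactly what the paper obtains. But there is a genuine algebraic slip that makes your argument, as written, fail.

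After correctly writing $w_\alpha\overline{g_{w_\beta}} - w_\beta\overline{g_{w_\alpha}} = a\bigl(w_\alpha \overline{(z_\beta/w_\beta)f_{z_\beta}} - w_\beta\overline{(z_\alpha/w_\alpha)f_{z_\alpha}}\bigr)$, you drop the overlines when stating your ``anticipated identity'': the modulus of this quantity is \emph{not} $\lvert w_\alpha(z_\beta/w_\beta)f_{z_\beta} - w_\beta(z_\alpha/w_\alpha)f_{z_\alpha}\rvert$. Conjugating the whole expression gives $\lvert \bar w_\alpha(z_\beta/w_\beta)f_{z_\beta} - \bar w_\beta(z_\alpha/w_\alpha)f_{z_\alpha}\rvert$, with $\bar w_\alpha,\bar w_\beta$ outside. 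Your displayed identity is in fact false: with $a=3$, $b=1$, $f_{z_\alpha}=f_{z_\beta}=1$, $w_\alpha=1$, $w_\beta=i$, the left side is $0$ and the right side is $2$. Your ``safe way'' does not rescue it either: in explicit monomials your two brackets differ term-by-term by the phases $w_\beta/\bar w_\beta$ and $w_\alpha/\bar w_\alpha$, which are different unit numbers, so this is not a global phase and the moduli need not agree.

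The fix is exactly what the paper does. Keeping the conjugate, the first modulus is
\[
a\,\bigl\lvert \bar w_\alpha\, w_\beta^{a-1}\bar w_\beta^{b}f_{z_\beta} - \bar w_\beta\, w_\alpha^{a-1}\bar w_\alpha^{b}f_{z_\alpha}\bigr\rvert
= a\,\lvert w_\alpha w_\beta\rvert\cdot\bigl\lvert w_\beta^{a-1}\bar w_\beta^{b-1}f_{z_\beta} - w_\alpha^{a-1}\bar w_\alpha^{b-1}f_{z_\alpha}\bigr\rvert,
\]
after factoring out $\bar w_\alpha\bar w_\beta$. The second modulus, after factoring out $w_\alpha w_\beta$, is $b\,\lvert w_\alpha w_\beta\rvert$ times the \emph{same} inner factor. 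Hence
\[
C_{\alpha,\beta}=(a^2-b^2)\,\lvert w_\alpha w_\beta\rvert^2\,\bigl\lvert w_\beta^{a-1}\bar w_\beta^{b-1}f_{z_\beta} - w_\alpha^{a-1}\bar w_\alpha^{b-1}f_{z_\alpha}\bigr\rvert^2\ge 0,
\]
which is the paper's formula.
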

\begin{proof}
By an easy calculation, we get 
\[
\begin{split}
g_{w_j}=f_{ z_j}(\vphi(\bfw))
aw_j^{a-1}\bar w_j^{b},\\
g_{\bar w_j}= f_{z_j}(\vphi(\bfw))
bw_j^{a}\bar w_j^{b-1}
\end{split}
\]
and for the case $b\ge 1$  we get
\begin{multline*}
C_{j,k}=
|a\bar   w_j    w_k^{a-1} \bar   w_k^b \vphi^* f_{z_k}-a \bar   w_k   w_j^{a-1}\bar   w_j^b\vphi^*f_{z_j}|^2\\
-|b    w_j    w_k^{a} \bar   w_k^{b-1} \vphi^* f_{z_k}- b   w_k   w_j^{a}\bar   w_j^{b-1}\vphi^*f_{z_j}|^2\\
=(a^2-b^2)|  w_j  w_k|^2\left(
  w_k^{a-1}\bar   w_k^{b-1}\vphi^*f_{z_k}-  w_j^{a-1}\bar   w_j^{b-1}\vphi^*f_{z_j}
\right)^2
\end{multline*}
If $b=0$, $g$ is a holomorphic function and 
\[C_{j,k}= a^2|\bar   w_j    w_k^{a-1} \vphi^* f_{z_k}- \bar   w_k
 w_j^{a-1}\vphi^*f_{z_j}|^2\ge 0.
\]
\end{proof}
We are ready to state the first  main theorem.
\begin{Theorem}\label{holomorphic-like}
Assume  that $f(\bfz)$ is a 
convenient non-degenerate
holomorphic function.
Consider a mixed homogeneous covering 
$\vphi=\vphi_{a,b}:\BC^n\to \BC^n$, $\vphi(\bfw,\bar \bfw)=
(w_1^a\bar w_1^b,\dots, w_n^a\bar w_n^b)$ and 
let $g(\bfw,\bar\bfw)=f(\vphi(\bfw,\bar\bfw))$.
 Assume  that $a>b> 0$ (respectively $0< a< b$)
 and
consider the link $K_r:=g\inv(0)\cap {\BS}_r$.
Then
there exists a positive number $r_0$ so that 
 $g$ is strictly holomorphic-like (resp. anti-holomorphic-like)
   on $B_{r_0}$ and $K_r\subset {\BS}_r$ is a positive
 (resp. negative)  contact submanifold
 for any $r,\,0<r\le r_0$.

If $f(\bfz)$ is weighted homogeneous, 
 $g(\bfw,\bar \bfw)$ 
is strictly holomorphic-like (resp. anti-holomorphic-like)  on $\BC^n$
and $K_r\subset {\BS}_r$ is a positive contact submanifold
for any $r>0$.
\end{Theorem}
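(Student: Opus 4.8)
The plan is to combine three ingredients already assembled in the excerpt: the explicit $4$-form formula of Corollary~\ref{contact-form}, the transversality Proposition~\ref{Transversality}, and the Lemma just above computing $C_{j,k}$ for the pull-back $g=\vphi_{a,b}^*f$. First I would reduce the contact condition to a sign condition on $C(\bfw,\bar\bfw)$. By Corollary~\ref{contact-form}, the form $d\rho\wedge\al\wedge(d\al)^{n-2}\wedge dg\wedge dh$ equals $4^{n-1}(n-2)!\,C(\bfw,\bar\bfw)$ times the volume form; so on the link $K_r=g\inv(0)\cap\BS_r$, where $g=0$ defines the pair $(g_{\mathrm{re}},h_{\mathrm{im}})$, the restriction $\al\wedge(d\al)^{n-2}$ is a nowhere-vanishing (positive) $(2n-3)$-form on $K_r$ precisely when $C>0$ at every point of $K_r$ — using the orientation convention of the ``Remarks on the orientation'' subsection, where positivity of $K_r$ is tied to $d\rho\wedge\Omega'\wedge dg\wedge dh$ being a positive volume form. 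Transversality of $\BS_r$ and $g\inv(0)$, needed for $K_r$ to be a genuine codimension-two submanifold, comes from Proposition~\ref{Transversality} applied to $g$, which is a convenient non-degenerate mixed function of polar weighted homogeneous face type by the Corollary following the mixed-covering Proposition.

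Next I would establish strict holomorphic-likeness of $g$ near the origin. From the Lemma above, for $a>b>0$ one has
\[
C_{j,k}=(a^2-b^2)\,|w_jw_k|^2\,\bigl(w_k^{a-1}\bar w_k^{b-1}\vphi^*f_{z_k}-w_j^{a-1}\bar w_j^{b-1}\vphi^*f_{z_j}\bigr)^2,
\]
hence $C_{j,k}\ge 0$ termwise, so $C(\bfw,\bar\bfw)=\sum_{j<k}C_{j,k}\ge 0$ identically; this already gives that $g$ is holomorphic-like on all of $\BC^n$. To get strictness on the link away from $\bf0$, I must rule out $C(\bfw_0,\bar\bfw_0)=0$ at a smooth point $\bfw_0\in K_r\setminus\{\bf0\}$. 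Since each $C_{j,k}\ge 0$, vanishing of $C$ forces every $C_{j,k}=0$; for an index $j$ with $w_{0j}\ne0$ this means either $w_{0k}=0$ or $w_k^{a-1}\bar w_k^{b-1}\vphi^*f_{z_k}=w_j^{a-1}\bar w_j^{b-1}\vphi^*f_{z_j}$ for each $k$. Chasing this together with the smoothness condition (the mixed gradient of $g$ nonzero) and the chain-rule expressions $g_{w_j}=aw_j^{a-1}\bar w_j^b\,\vphi^*f_{z_j}$, $g_{\bar w_j}=bw_j^a\bar w_j^{b-1}\,\vphi^*f_{z_j}$, one sees the common value of $w_k^{a-1}\bar w_k^{b-1}\vphi^*f_{z_k}$ would have to be zero on every coordinate hyperplane-complement, which by convenience and non-degeneracy of $f$ on the appropriate face forces $\bfw_0$ into the singular locus — a contradiction. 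This is the step I expect to be the main obstacle: the bookkeeping over which coordinates vanish, and invoking non-degeneracy of the face function $f_P$ at the relevant face, is where the real work lies, and one must be careful that ``smooth point of $g\inv(0)$'' is used in the mixed sense and that $r_0$ is chosen small enough that all points of $g\inv(0)\cap B_{r_0}$ lie in the range where the Newton-boundary face functions control the behavior.

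Finally, for the weighted-homogeneous case, $g$ becomes strongly polar positive weighted homogeneous (again by the cited Corollary), so the $\BR_{>0}$-action $t\circ\bfw$ scales $K_r$ to $K_1$ and scales $C$ by a positive power of $t$; hence strict holomorphic-likeness, once known on one link $K_{r_0}$, propagates to every $K_r$, $r>0$, and indeed to all of $\BC^n\setminus\{\bf0\}$ on the zero set, giving the contact property for all $r>0$ without any smallness restriction. The anti-holomorphic-like case $0<a<b$ is identical after interchanging the roles of $w_j$ and $\bar w_j$: then $a^2-b^2<0$, every $C_{j,k}\le0$, and $K_r$ is a negative contact submanifold by the same argument with reversed signs. \qed
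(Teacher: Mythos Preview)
Your reduction to the positivity of $C(\bfw,\bar\bfw)$ via Corollary~\ref{contact-form}, together with the termwise non-negativity $C_{j,k}\ge 0$ from the Lemma, matches the paper's framework exactly. The genuine gap is in your argument that $C$ cannot vanish at a smooth point of the link.

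You claim that if $C(\bfw_0,\bar\bfw_0)=0$ then the common value $\gamma:=w_k^{a-1}\bar w_k^{b-1}\vphi^*f_{z_k}$ (for $k$ in $I=\{j:w_{0j}\ne0\}$) ``would have to be zero,'' forcing $\bfw_0$ into the singular locus. But nothing you have written rules out $\gamma\ne0$. In that case one gets $g_{w_j}(\bfw_0)=a\gamma\,\bar w_{0j}$ and $g_{\bar w_j}(\bfw_0)=b\gamma\,w_{0j}$, which is perfectly compatible with $\bfw_0$ being a smooth point of $g^{-1}(0)$; without an Euler identity there is no pointwise contradiction with $g(\bfw_0)=0$. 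The paper does not argue pointwise: it applies the Curve Selection Lemma to obtain an analytic arc $\bfw(t)\to\bf0$ in $\{C=0\}\cap g^{-1}(0)$, and then differentiates $g(\bfw(t))\equiv0$ along the arc. Using $f_{z_j}(\vphi(\bfw(t)))=\gamma(t)/(w_j^{a-1}\bar w_j^{b-1})$ one finds that the derivative equals $\gamma(t)\cdot\frac{a+b}{2}\cdot\frac{d}{dt}\|\bfw(t)\|^2$, forcing $\|\bfw(t)\|$ constant and contradicting $\bfw(t)\to\bf0$. This curve argument is the missing idea; your Newton-face remark does not supply it.

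Your weighted-homogeneous step also fails as written. The radial action $t\circ\bfw=(t^{p_1}w_1,\dots,t^{p_n}w_n)$ does not carry the round sphere $\BS_r$ to another round sphere unless all $p_j$ coincide, so it does not ``scale $K_r$ to $K_1$''; and the individual terms $w_k^{a-1}\bar w_k^{b-1}\vphi^*f_{z_k}$ scale by $t^{(a+b)d-2p_k}$, which depends on $k$, so $C$ is not simply rescaled. The paper instead argues directly at a single point via the Euler identity~(\ref{strongEuler}): if $\gamma\ne0$ then $0=a\,d\,g(\bfw_0)=\sum_j p_jw_jg_{w_j}(\bfw_0)=a\gamma\sum_{j\in I}p_j|w_{0j}|^2\ne0$, a contradiction; this works on every $\BS_r$ simultaneously without any scaling.
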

\begin{proof}
By the convenience assumption of $f(\bfz)$, 
$g(\bfw,\bar \bfw)$ is convenient.
As $f(\bfz)$ has an isolated singularity at the origin
and the restriction $\vphi:\BC^{*I}\to \BC^{*I}$ is a covering mapping for any $I\subset
 \{1,\dots,n\}$,
$g^I$ has an isolated mixed singularity at the origin.
Put $g(\bfw,\bar\bfw)=\Re g(\bfw,\bar\bfw)+i\Im g(\bfw,\bar\bfw)$. As a submanifold of $\BC^n$,
$K$ is a complete intersection variety defined by three real valued functions
$\rho= \Re g=\Im g=0$.
As the proof is completely the same, we assume that $a>b>0$.
 To prove $\al\land d\al^{n-2}$ is  positive non-vanishing on $K_r$, we can equivalently show that 
$d\rho\land \al\land d\al^{n-2}\land d\Re g\land d\Im g$ is locally non-vanishing on an arbitrary
chosen   point  $\bfz\in K_r$ and positive. 
This follows from the fact that  by the complete intersection property
$\rho, \Re g,\Im g$ can be a part of real coordinate system of $\BC^n$ near any
 point of $K_r$.
Namely there exist real analytic functions $h_4,\dots, h_{2n}$ such that 
$(\rho,\Re g,\Im g,h_4,\dots, h_{2n})$ are local coordinates.
By  Corollary \ref{contact-form} and Corollary, we have
\begin{eqnarray*}
&&d\rho\land\al\land d\al^{n-2}\land d\Re\,g\land
   d\Im\,g(\bfw,\bar\bfw)=\\
 &&i^n 2^{n-2}(n-2)!\,C(\bfw,\bar\bfw) dw_1\land d\bar w_1\land\cdots\land dw_n\land d \bar w_n\\
\end{eqnarray*}
The proof of the theorem is reduced to the following Lemma.
\end{proof}

\begin{Lemma}\label{key3} 
{\rm (1)} A smooth link $K=g\inv(0)\cap {\BS}_r$ is a contact submanifold of
 ${\BS}_r$
if and only if $C(\bfw,\bar \bfw)>0$ on $K$.

{\rm (2)}
Assume that $g=\vphi^* f$ be as in Theorem \ref{holomorphic-like} with $a>b> 0$.
Then there exists a sufficiently small neighborhood $U$ of the origin so that 
$C(\bfw,\bar\bfw)>0$ for any $\bfw\in g\inv(0)\cap U\setminus\{\bf0\}$.

If further $f(\bfz)$ is weighted homogeneous, $U$ can be the whole space
$\BC^n$.
\end{Lemma}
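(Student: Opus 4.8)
The plan is to read (1) off from Corollary \ref{contact-form} directly, and to prove (2) by reducing it --- via the formula for $C_{j,k}$ in the Lemma preceding Theorem \ref{holomorphic-like} --- to a non-existence statement for critical points of Newton--nondegenerate germs, which is handled by Euler's identity in the weighted homogeneous case and by the curve selection lemma in general. For part (1): since $g=\vphi^*f$ is convenient and non-degenerate, for small $r$ the link $K=g\inv(0)\cap\BS_r$ is a smooth complete intersection in $\BC^n$ defined by $\rho,\Re g,\Im g$, so near any point of $K$ the triple $(\rho,\Re g,\Im g)$ is part of a real coordinate system. Hence $\al\land(d\al)^{n-2}$ restricts to a positive (in particular nowhere vanishing) $(2n-3)$-form on $K$ if and only if $d\rho\land\al\land(d\al)^{n-2}\land d\Re g\land d\Im g$ is a positive $2n$-form near $K$ --- this is exactly the orientation convention recalled above for $K$ and $\BS_r$. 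By Corollary \ref{contact-form} the latter equals $4^{n-1}(n-2)!\,C(\bfw,\bar\bfw)\,dx_1\land dy_1\land\cdots\land dx_n\land dy_n$, which is positive precisely where $C(\bfw,\bar\bfw)>0$. This settles (1).

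For part (2), since $a>b>0$ gives $a^2-b^2>0$, the formula for $C_{j,k}$ in the Lemma preceding Theorem \ref{holomorphic-like} gives $C_{j,k}\ge 0$ for all $j<k$, hence $C\ge 0$ identically; so it suffices to exclude $C(\bfw)=0$ at a point $\bfw\in g\inv(0)$, $\bfw\neq\bf0$, near the origin (resp. at any such $\bfw$, when $f$ is weighted homogeneous). Assume $C(\bfw)=0$, $g(\bfw)=0$, $\bfw\neq\bf0$, and set $I=\{j:w_j\neq 0\}\neq\emptyset$, $\bfz=\vphi(\bfw)$. Since $z_j=w_j^a\bar w_j^b\neq 0$ exactly when $w_j\neq 0$, the point $\bfz$ is nonzero, lies in $f\inv(0)$, and has all $I$-coordinates nonzero. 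Vanishing of every $C_{j,k}$ --- which, as $|w_jw_k|^2>0$ for $j,k\in I$, forces $w_k^{a-1}\bar w_k^{b-1}f_{z_k}(\bfz)=w_j^{a-1}\bar w_j^{b-1}f_{z_j}(\bfz)$ --- shows $w_j^{a-1}\bar w_j^{b-1}f_{z_j}(\bfz)$ is a number $\la$ independent of $j\in I$; multiplying by $|w_j|^2=w_j\bar w_j$ gives $z_jf_{z_j}(\bfz)=\la|w_j|^2$ for $j\in I$ (and $=0$ for $j\notin I$). Let $f^I$ be the restriction of $f$ to $\{z_k=0\ (k\notin I)\}$; it is again convenient and non-degenerate, hence has an isolated singularity at $\bf0$, and $(f^I)_{z_j}=f_{z_j}|_{\{z_k=0\,(k\notin I)\}}$ for $j\in I$. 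If $f$ is weighted homogeneous, say $f(t^{\kappa_1}z_1,\dots,t^{\kappa_n}z_n)=t^{m}f(\bfz)$ with $\kappa_j>0$, $m>0$, the Euler identity $\sum_j\kappa_jz_jf_{z_j}(\bfz)=m\,f(\bfz)=0$ yields $\la\sum_j\kappa_j|w_j|^2=0$, so $\la=0$ and $f_{z_j}(\bfz)=0$ for all $j\in I$; together with $f(\bfz)=0$ this makes $\bfz\neq\bf0$ a critical point of the weighted homogeneous germ $f^I$, impossible since $f^I$ has an isolated singularity. Hence $C>0$ on $g\inv(0)\setminus\{\bf0\}$ and $U=\BC^n$ works.

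In general, suppose no admissible $U$ exists. Then $\bf0$ lies in the closure of $\big(g\inv(0)\cap\{C=0\}\big)\setminus\{\bf0\}$, and the curve selection lemma \cite{Milnor} produces a real analytic arc $\ga(t)$, $t\in[0,\eps)$, with $\ga(0)=\bf0$, $\ga(t)\neq\bf0$ for $t>0$, along which $g\equiv 0$ and $C\equiv 0$. For $j\in I:=\{j:\ga_j\not\equiv 0\}$ write $\ga_j(t)=a_jt^{p_j}+(\text{higher order})$, $a_j\neq 0$, $p_j\ge 1$, set $q_j=(a+b)p_j$, and put $\bfz(t)=\vphi(\ga(t))$, so $z_j(t)=a_j^a\bar a_j^b\,t^{q_j}+\cdots$. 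As above $z_j(t)f_{z_j}(\bfz(t))=|\ga_j(t)|^2\la(t)$ for all $j$, with $\la(t)=\ga_j(t)^{a-1}\overline{\ga_j(t)}^{\,b-1}f_{z_j}(\bfz(t))$ for $j\in I$. If $\la\equiv 0$, then $f_{z_j}(\bfz(t))\equiv 0$ for $j\in I$, so the arc $\bfz(t)$, which avoids $\bf0$, consists of critical points of $f^I$, contradicting that $f^I$ has an isolated singularity. If $\la\not\equiv 0$, differentiating $f(\bfz(t))\equiv 0$ and using that $f$ is holomorphic gives $\sum_j f_{z_j}(\bfz(t))\dot z_j(t)=0$; writing $\dot z_j(t)=\tfrac{q_j}{t}z_j(t)+r_j(t)$ with $\mathrm{ord}_t r_j\ge q_j$ (and $r_j\equiv 0$ for $j\notin I$) and substituting $z_jf_{z_j}(\bfz(t))=|\ga_j(t)|^2\la(t)$, this becomes
\[
\frac{\la(t)}{t}\sum_{j} q_j\,|\ga_j(t)|^2 \;+\; \sum_j r_j(t)\,f_{z_j}(\bfz(t)) \;=\; 0 .
\]
With $\ell=\mathrm{ord}_t\la<\infty$ one gets $\mathrm{ord}_t\!\big(f_{z_j}(\bfz(t))\big)=2p_j+\ell-q_j$, hence $\mathrm{ord}_t\!\big(r_jf_{z_j}(\bfz(t))\big)\ge 2p_j+\ell$, so the second sum has order $\ge 2p^*+\ell$ with $p^*=\min_{j\in I}p_j$; but the first sum has order $\ell+2p^*-1$ and leading coefficient $\la_0\sum_{j\in I,\,p_j=p^*}q_j|a_j|^2$, nonzero since $\la_0\neq 0$ and $\sum_{j\in I,\,p_j=p^*}q_j|a_j|^2>0$. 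As $\ell+2p^*-1<2p^*+\ell$, this leading term cannot be cancelled --- a contradiction. Therefore $C>0$ on $g\inv(0)\cap U\setminus\{\bf0\}$ for a small enough ball $U$, proving (2).

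The step I expect to be the main obstacle is this last order count on the arc: one must verify that $\mathrm{ord}_t\!\big(f_{z_j}(\bfz(t))\big)$ is exactly $2p_j+\ell-q_j$, that the correction terms $r_j$ raise the order by at least $q_j$, and --- crucially --- that the ``radial Euler'' term $\frac{\la}{t}\sum_jq_j|\ga_j(t)|^2$ does not vanish to still higher order, its leading coefficient being a strictly positive real multiple of $\la_0\neq 0$. The remaining ingredients are either the orientation bookkeeping already packaged in Corollary \ref{contact-form} (for (1)), or the standard fact that convenience, non-degeneracy and isolatedness of the singularity pass to restrictions to coordinate subspaces.
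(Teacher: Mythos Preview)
Your proof is correct. Part (1) is identical to the paper's argument, and your weighted-homogeneous case for (2) is the same idea (Euler's identity plus the isolated-singularity of $f^I$), phrased on $f$ rather than on $g$ via the strong Euler equalities (\ref{strongEuler}) as the paper does.

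The general (non-weighted-homogeneous) case is where you diverge. The paper differentiates $g(\bfw(t),\bar\bfw(t))\equiv 0$ directly: using $g_{w_j}=a\gamma(t)\bar w_j$ and $g_{\bar w_j}=b\gamma(t)w_j$ on $I$, with $\gamma(t)$ your $\lambda(t)$, it obtains $\gamma(t)\bigl(aA(t)+b\bar A(t)\bigr)=0$ where $A(t)=\sum_j\bar w_j(t)\dot w_j(t)$; since $\gamma\not\equiv 0$ and $a>b>0$, this forces $A\equiv 0$, hence $\tfrac{d}{dt}\|\bfw(t)\|^2=2\Re A\equiv 0$, contradicting $\bfw(t)\to \mathbf 0$. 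You instead push down to $\bfz(t)=\vphi(\bfw(t))$, differentiate the holomorphic $f$, split $\dot z_j=(q_j/t)z_j+r_j$, and run an order count. Both routes work; the paper's is shorter and avoids the bookkeeping of $\mathrm{ord}_t f_{z_j}(\bfz(t))$ and the remainder terms $r_j$, while yours has the virtue of being a generic ``leading-term plus Euler'' argument that would transplant to other settings where the clean factorization $g_{w_j}=a\gamma\bar w_j$ is not available. Your concern in the final paragraph is well-placed but the details check out: from $z_jf_{z_j}=|\gamma_j|^2\lambda$ and $\gamma_j,\lambda\not\equiv 0$ one gets $f_{z_j}\not\equiv 0$ along the arc for every $j\in I$, so $\mathrm{ord}_t f_{z_j}$ is exactly $2p_j+\ell-q_j$; the remainder $r_j$ indeed has order $\ge q_j$; and the leading coefficient $\lambda_0\sum_{p_j=p^*}q_j|a_j|^2$ is a nonzero complex number times a strictly positive real, hence nonzero.
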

\begin{proof}
Recall that 
\[
C_{j,k}=(a^2-b^2)|  w_j  w_k|^2\left(
  w_k^{a-1}\bar   w_k^{b-1}\vphi^*f_{z_k}-  w_j^{a-1}\bar   w_j^{b-1}\vphi^*f_{z_j}
\right)^2
\]
Suppose that $C(\bfw,\bar\bfw)=0$ for any small neighborhood. Applying the Curve Selection Lemma
(\cite{Hamm1}), we get a real analytic curve 
$\bfw(t),\,0\le t\le 1$ such that for any $1\le j,k\le n$,
\begin{eqnarray}
\label{eq11}\,\,\,\,
\begin{cases}
& w_j w_k
\left (
w_k^{a-1}\bar w_k^{b-1} \vphi^*f_{z_k}
- w_j^{a-1}\bar w_j^{b-1}
 \vphi^*f_{z_j}
 \right )|_{\bfw=\bfw(t)}=0,\\
&g(\bfw(t),\bar\bfw(t))\equiv 0, 
\quad \bfw(t)\in \BC^{n}\setminus\{\bf 0\},\,t\ne 0.
\end{cases}
\end{eqnarray}
Let $I=\{j\,|\, w_j(t)\ne 0\}$.  
Note that $|I|\ge 2$ as each coordinate axis is not included in
 ${g}\inv(0)$ by the convenience assumption. 
By  the non-degeneracy  assumption on $f$,
there exists $j\in I$
such that $f_{z_j}(\vphi(\bfw(t),\bar \bfw(t)))\ne 0$. This implies by (\ref{eq11}),
\[
 w_k(t) f_{z_k}(\vphi(\bfw(t),\bar \bfw(t)))\ne 0
\]
for any $k\in I$. 
Take  $k\in I$ and put 
\[
 \ga(t)=w_k^{a-1}\bar   w_k^{b-1}\vphi^*f_{z_k}|_{\bfw=\bfw(t)}.
\] 
Then $\ga(t)\not\equiv 0$ and $\ga(t)$ does not depend on the choice of $k\in I$.
 Put  $ v_j(t):=\frac{dw_j(t)}{dt}$
 and 
take the differential of (\ref{eq11}). 
 As $v_j=0$ for $j\notin I$, we get

\begin{eqnarray*}
0&=&\frac{dg(\bfw(t),\bar\bfw(t))}{dt}\\
&=&\sum_{j=1}^n f_{z_j}(\vphi(\bfw(t),\bar\bfw(t)))(a w_j(t)^{a-1}\bar w_j(t)^{b} {v}_j(t)+b w_j(t)^a \bar w_j(t)^{b-1} \bar v_j(t))\\
&=&\ga(t)\sum_{j=1}^n (a \,\bar w_j(t) v_j(t)+b\, w_j(t) \bar  v_j(t))\\
&=&\ga(t) \frac{(a+b)}2 \frac{d\|\bfw(t)\|^2}{dt}.
\end{eqnarray*}
Thus  $\frac{d\|\bfw(t)\|^2}{dt}\equiv 0$ .
The last equality is derived from 
\[
\frac{d(\|\bfw(t)\|^2}{dt}=\sum_{j=1}^n (w_j(t)\bar v_j(t)+
\bar w_j(t) v_j(t))=2\Re \sum_{j=1}^n w_j(t)\bar{v}_j(t).
\]
This  implies that $\|\bfw(t)\|$ is constant
  which is a contradiction to the assumption $\|\bfw(t)\|\to 0\,(t\to 0)$.

%
We prove the second assertion.
Assume that $f(\bfz)$ is a weighted homogeneous polynomial
of degree $d$ with a weight vector
 $P=(p_1,\dots, p_n)$ and $a>b > 1$.
Then $g=\vphi^*f$ is a strongly polar weighted homogeneous 
 polynomial
with
$\rdeg_P g=(a+b)d$ and $\pdeg_Pg=(a-b)d$.
Put $I=\{j\,|\, w_j\ne 0\}$.
Assume that $C_{j,k}(\bfw,\bar\bfw)=0$ for any  $j,k$  for some $\bfw\in g\inv(0)\setminus \{0\}$.
Put 
\[
 \ga=w_k^{a-1}\bar w_k^{b-1}\vphi^*f_{z_k} 
\]
for any fixed $ k\in I$.
Note that $\ga$ is independent of $k\in I$.
As $f^I$  (and also $g^I$ ) has an isolated singularity at the origin, $\ga\ne 0$.
Then this implies that 
\[
g_{w_j}=aw_j^{a-1}\bar w_j^{b}\vphi^*f_{z_j}=
a\ga\bar w_j,\,j\in I
\]
and by Euler equality (\ref{strongEuler}), we get 
a contradiction
\[\begin{split}
 0=a\, d\, g(\bfw,\bar\bfw)&=\sum_{j=1}^n p_j w_j  g_{w_j}\\
&=\sum_{j\in I} p_j w_j  g_{w_j}\\
&=\ga a\,\sum_{j\in I} p_j|w_j|^2\ne 0. 
\end{split}
\]

\end{proof}

\begin{Corollary}\label{holom}
Assume  that $f(\bfz)$ is a
holomorphic function with isolated singularity at the origin.
Consider the link $K_r:=g\inv(0)\cap {\BS}_r$.
Then
there exists a positive number $r_0$ so that 
$K_r\subset {\BS}_r$ is a positive contact submanifold
 for any $0<r\le r_0$.

If $f(\bfz)$ is weighted homogeneous, 
 $K_r\subset {\BS}_r$ is a positive contact submanifold
for any $r>0$.
\end{Corollary}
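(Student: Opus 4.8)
The plan is to obtain this as the degenerate case $b=0$ of the circle of ideas behind Theorem~\ref{holomorphic-like} and Lemma~\ref{key3}. Since that theorem is stated only for $b>0$, I would run the (shorter) argument directly, reusing the structure of the proof of Lemma~\ref{key3}(2); here $K_r=f\inv(0)\cap\BS_r$ (equivalently $g=\vphi_{1,0}^*f=f$). First I record that, $f$ being holomorphic, $f_{\bar z_j}\equiv0$, so the expression in Corollary~\ref{4-form} collapses to $C_{a,b}(\bfz,\bar\bfz)=|\bar z_a f_{z_b}-\bar z_b f_{z_a}|^2\ge0$, hence $C(\bfz,\bar\bfz)=\sum_{a<b}C_{a,b}\ge0$; in particular $f$ is holomorphic-like. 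For $r$ small $K_r$ is a smooth $(2n-3)$-manifold (an isolated singularity of a holomorphic function forces $\BS_r$ to meet $f\inv(0)$ transversely, by Milnor's classical lemma), so by Lemma~\ref{key3}(1) it suffices to prove $C(\bfz,\bar\bfz)>0$ on $f\inv(0)\setminus\{\bf0\}$ inside a small ball; the \emph{positivity} of $\al\land d\al^{n-2}$ on $K_r$, not just its non-vanishing, is then automatic from Corollary~\ref{contact-form}, in which $C(\bfz,\bar\bfz)$ multiplies the standard orientation form $dx_1\land dy_1\land\cdots\land dx_n\land dy_n$ of $\BC^n$.

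To prove $C>0$ near $\bf0$ I would argue by contradiction, exactly as in Lemma~\ref{key3}(2) but with the face-decomposition and convenience ingredients removed. If $C$ vanishes somewhere in every neighbourhood of $\bf0$, the Curve Selection Lemma \cite{Hamm1} applied to $\{\bfz\,|\,f(\bfz)=0,\ C(\bfz,\bar\bfz)=0\}$ produces a real analytic arc $\bfz(t)$, $0\le t\le1$, with $\bfz(0)=\bf0$, $\bfz(t)\ne\bf0$ for $t\ne0$, $f(\bfz(t))\equiv0$, and all minors $\bar z_a f_{z_b}-\bar z_b f_{z_a}$ vanishing along $\bfz(t)$. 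Vanishing of these minors means that $\bar\bfz(t)$ and $(f_{z_1},\dots,f_{z_n})(\bfz(t))$ are $\BC$-proportional; since $\bar\bfz(t)\ne\bf0$ there is $\la(t)\in\BC$ with $f_{z_j}(\bfz(t))=\la(t)\bar z_j(t)$, and $\la(t)\ne0$ for $t\ne0$ because a nonzero critical point of $f$ is excluded by the isolated singularity. Differentiating $f(\bfz(t))\equiv0$ gives $0=\sum_j f_{z_j}(\bfz(t))\dot z_j(t)=\la(t)\sum_j\bar z_j(t)\dot z_j(t)$, hence $\sum_j\bar z_j(t)\dot z_j(t)\equiv0$ and $\frac{d}{dt}\|\bfz(t)\|^2=2\Re\sum_j\bar z_j(t)\dot z_j(t)\equiv0$, so $\|\bfz(t)\|$ is constant --- contradicting $\|\bfz(t)\|\to0$ as $t\to0$. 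This gives $r_0>0$ with $C>0$ on $f\inv(0)\setminus\{\bf0\}$ in $B_{r_0}$, so $K_r$ is a positive contact submanifold of $\BS_r$ for every $0<r\le r_0$.

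In the weighted homogeneous case I would drop the ball entirely: if $f$ has degree $d$ and weight vector $P=(p_1,\dots,p_n)$ and $C(\bfz,\bar\bfz)=0$ at some $\bfz\in f\inv(0)\setminus\{\bf0\}$, the same proportionality gives $f_{z_j}(\bfz)=\la\bar z_j$ with $\la\ne0$, and the Euler identity $d\,f(\bfz)=\sum_j p_j z_j f_{z_j}(\bfz)=\la\sum_j p_j|z_j|^2\ne0$ contradicts $f(\bfz)=0$. Hence $C>0$ on all of $f\inv(0)\setminus\{\bf0\}$, and $K_r$ is a positive contact submanifold for every $r>0$.

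I do not expect a genuine obstacle: this is the classical holomorphic case, and each nontrivial ingredient --- the local formula for $d\rho\land\al\land d\al^{n-2}\land dg\land dh$, the reduction to positivity of $C$, and the orientation bookkeeping --- is already supplied by Corollaries~\ref{4-form}--\ref{contact-form} and Lemma~\ref{key3}(1). The only point needing a little care is checking that the $b=0$ specialization of the Curve Selection argument still closes without the convenience hypothesis used in Lemma~\ref{key3}(2); as sketched above it does, and in fact shortens, since on $f\inv(0)$ itself one need not pass to any face.
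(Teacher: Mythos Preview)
Your proposal is correct and follows essentially the same route as the paper's own proof: both invoke the Curve Selection Lemma on $\{C=0\}\cap f^{-1}(0)$, extract from the vanishing of all $C_{j,k}$ the proportionality $f_{z_j}=\la\,\bar z_j$ (you state it as vector proportionality, the paper writes it index-by-index via $c(\bfz)=f_{z_k}/\bar z_k$), differentiate $f(\bfz(t))\equiv 0$ to force $\frac{d}{dt}\|\bfz(t)\|^2\equiv 0$, and in the weighted homogeneous case substitute into the Euler identity. Your phrasing via rank-one $2\times n$ minors is marginally cleaner and sidesteps the paper's unverified claim $|I|\ge 2$ (which in Lemma~\ref{key3} came from convenience), but the argument is the same.
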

\begin{proof}The proof is parallel to that of Lemma \ref{key3}.
Recall that 
\[C_{j,k}=|a\bar   z_j    f_{z_k}-a \bar   z_k f_{z_j}|^2.
\]
We do the same argument.  If $\{\bfz\in \BC^n\,|\, C(\bfz,\bar\bfz)=0\}\cap f\inv(0)$ is not isolated at the origin,
we  take an analytic curve $\bfz(t)\in\{\bfz\,|\,C(\bfz,\bar\bfz)=0\}\cap f\inv(0)$ as above.
Putting $I=\{j\,|\, z_j(t)\ne 0\}$  as above, we get  $|I|\ge 2$.
As $f|{\BC^I}$ has an isolated singularity, we
can assume that $f_{z_k}(\bfz(t))\ne 0$ for some $k\in I$.
Take $j\in I$ with $j\ne k$. Then $C_{j,k}=0$
implies that $\bar z_j(t) f_{z_k}(\bfz(t))\ne 0$ and thus 
  $k\in I$ and $f_{z_j}(\bfz(t))\ne 0$.
Put $c(\bfz)=f_{z_k}(\bfz)/\bar z_k$ for a fixed  $k\in I$. 
This is a non-zero and independent of $k\in I$.
Note that $v_k(t)=0$ for $k\notin I$. Therefore
 we get 
 \begin{eqnarray*}
 0&=&\frac{df(\bfz(t))}{dt}=\sum_{j=1}^n f_{z_j}(\bfz(t))v_j(t)\\
&=&
\sum_{j\in I} f_{z_j}(\bfz(t))v_j(t)=
 c(\bfz(t))\sum_{j\in I} \bar z_j(t)v_j(t)
 \end{eqnarray*}
and we get the same contradiction $\frac{d\|\bfz(t)\|^2}{dt}\equiv 0$.

Finally assume further $f(\bfz)$ is weighted homogeneous of degree $d$
with weight vector $P=(p_1,\dots, p_n)$.
Put $I=\{j\,|\, z_j\ne 0\}$ as above and put $c(\bfz)=f_{z_k}(\bfz)/\bar z_k$ for $k\in I$. 
Then $f_{z_j}(\bfz)=c(\bfz) \bar z_k$ and  we get the same contradiction:
\begin{eqnarray*}
0=f(\bfz)=\sum_{j=1}^n p_j z_jf_{z_j}(\bfz)=\sum_{j=1}^n c(\bfz) p_j|z_j|^2\ne 0.
\end{eqnarray*}
\begin{Remark}
Corollary  \ref{holom} gives 
a simple proof of holomorphic link 
to be a contact submanifold without using the strict  pseudo-convex property.  
\end{Remark}

\end{proof}
\section{Open book structure.}
\subsection{Open book}
{\em An open book with binding $N$} on an oriented manifold $M$ 
of dimension $2n-1$ is a couple $(N,\theta)$ where
$N$ is a codimension two submanifold with a trivial normal bundle and 
$\theta: M\setminus N\to S^1$ is a local trivial smooth fibration where
$\theta$ coincide with the angular coordinate of the trivial tubular neighborhood $ N\times D_\de\subset M$ (\cite{Giroux, C-N-PP}).
The orientation of $M$ gives a canonical orientation to the fiber $F_{\eta}:=\theta\inv(\eta)$
for each $\eta\in S^1$. Restricting the fibration on $M\setminus N\times D_\de$, 
the fiber $F_\eta':=F_\eta\cap (M\setminus N\times D_\de)$ is a manifold
with boundary $N$. Thus $N$ has also a canonical orientation.
\subsection{Contact structure carried by an open book}
Assume that we have a contact form $\xi$ defined by a global 1-form $\al$ as before.
We say that a contact structure $\xi$  is
{\em  carried by an open book }
 $(N,\theta)$ if the following  are satisfied (\cite{Giroux}).
 \begin{enumerate}
  \item  The restriction of $\al$ to $N$ is a contact form on $N$.
 \item The two-form  $d\al$ defines a symplectic form of each fiber $F_\eta=\theta\inv(\eta)$.
 \item The orientation of $N$ induced by $\al$ is the same as that of the boundary of $F_\eta$.
 \end{enumerate}
Recall that the condition (2)
is equivalent to 
$d\theta(R)>0$
where $R$ is the Reeb vector field (\cite{Giroux,Geiges}).
 For further detail about  a contact structure carried with an open book
   and symplectic structures, see  
   H. Geiges \cite{Geiges}, 
 Giroux \cite{Giroux}, R. Berndt \cite{Berndt} and Caubel-Nemethi-Popescu-Pampu \cite{C-N-PP}.
 
\subsection{Milnor open book for mixed functions.}
Let $g(\bfz,\bar\bfz)$ be a convenient strongly 
non-degenerate  mixed function.
Let $V=g\inv(0)$ and we assume that $V$ has an isolated mixed singularity at the origin.
By Theorem 33 (\cite{OkaMix}), we have 
\begin{Theorem}\label{open-book}
For a sufficiently small $r$,  the mapping 
\begin{eqnarray} \label{Milnor-fibration}
g/|g|:\quad {\BS}_r\setminus K_r\to S^1
\end{eqnarray}
is a locally trivial fibration.
\end{Theorem}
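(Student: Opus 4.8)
\noindent This is Theorem 33 of \cite{OkaMix}; here is the strategy I would follow. The plan is to verify, on a sufficiently small sphere, the two conditions in Milnor's fibration scheme and then to invoke Ehresmann's theorem. The two conditions are: \emph{(a)} for every small $r>0$ the sphere ${\BS}_r$ is transverse to $V=g\inv(0)$; and \emph{(b)} for every small $r>0$ the argument map $\vphi:=g/|g|$ has no critical point on ${\BS}_r\setminus K_r$.

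\noindent I would establish \emph{(a)} first. Assuming the contrary, the set of points of $V\setminus\{\bf0\}$ at which the sphere through that point fails to be transverse to $V$ is real-analytic and, by hypothesis, has the origin in its closure; the Curve Selection Lemma (\cite{Hamm1}) then produces a real-analytic curve $\bfz(t)\to\bf0$, $0<t<\eps$, along which $\bfz=\alpha(t)\,\bigtriangledown\Re g(\bfz)+\beta(t)\,\bigtriangledown\Im g(\bfz)$ for real-analytic $\alpha,\beta$ (here I use that every point of $V$ near, but distinct from, the origin is mixed regular, by the isolated-singularity hypothesis). Taking the real part of the hermitian product with $\dot\bfz(t)$ and using $\Re g(\bfz(t))\equiv\Im g(\bfz(t))\equiv 0$ gives $\tfrac{d}{dt}\|\bfz(t)\|^2\equiv 0$, contradicting $\bfz(t)\to\bf0$. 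This is the local counterpart of Proposition \ref{Transversality}, with the curve-selection curve replacing the radial orbit.

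\noindent Next I would prove \emph{(b)}, which is where strong non-degeneracy enters. Writing $g=\Re g+i\,\Im g$, a point $\bfz\in{\BS}_r\setminus K_r$ is a critical point of $\vphi$ precisely when $d(\arg g)$ vanishes on $T_\bfz{\BS}_r$; unwinding this as in \cite{Milnor} produces a linear relation of a definite form among $i\bfz$, $\bigtriangledown\Re g(\bfz)$ and $\bigtriangledown\Im g(\bfz)$. If such points accumulated at the origin, the Curve Selection Lemma would again furnish a real-analytic curve $\bfz(t)\to\bf0$ on which that relation holds. Then, differentiating $g(\bfz(t))$ both along $\dot\bfz(t)$ and along the weighted radial orbit attached to the face of $\Ga(g)$ approached by $\bfz(t)$, and using the radial weighted homogeneity of the corresponding face function together with non-degeneracy, one would conclude that $\tfrac{d}{dt}\|\bfz(t)\|^2$ keeps a fixed sign, contradicting $\|\bfz(t)\|\to 0$. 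This is the same mechanism that powers Lemma \ref{key3}.

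\noindent Granting \emph{(a)} and \emph{(b)}, one removes a small tubular neighbourhood of $K_r$, applies Ehresmann's theorem to the resulting proper submersion onto $S^1$ (a compact manifold with boundary), and checks that the fibration extends across the collar of $K_r$ via the normal form of $\vphi$ near the binding; this yields the locally trivial fibration. The hard part is \emph{(b)}: since $g$ need not itself be weighted homogeneous, one cannot use a single global Euler identity as in Lemma \ref{key3}, and must instead localize to the lowest-weight face of $\Ga(g)$ that $\bfz(t)$ approaches and carefully control the higher-order terms there — this is precisely the technical core of Theorem 33 in \cite{OkaMix}.
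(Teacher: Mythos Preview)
The paper gives no proof of this statement: it simply invokes Theorem~33 of \cite{OkaMix}. You correctly identify this at the outset, and your outline (transversality of small spheres via the Curve Selection Lemma, absence of critical points of the argument map via strong non-degeneracy and a face-function/lowest-weight analysis, followed by Ehresmann's theorem) is a faithful sketch of the argument in that reference. Since the present paper contains no proof to compare against, your proposal goes beyond what is offered here and there is nothing further to contrast.
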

By the transversality, we have a trivial tubular neighborhood
$K_r\times D_\de$ such that the following diagram commutes.
\[
\begin{matrix}
K_r\times D_\de&\subset &K_r\times  D_\de^*&\subset &{\BS}_r\setminus K_r\\
&&\mapdown{p}&&\mapdown{g/|g|}\\
&&D_\de^*&\mapright{normal}&S^1
\end{matrix}
\]
where $D_\de^*=\{\eta\in \BC\,|\,0\ne \eta, |\eta|\le \de\}$,
$p$ is the  second projection and {\em normal}  is the normalization
map $\eta\mapsto \eta/|\eta\|$.
The argument $\theta$  is characterized by the equality:
\[\log g(\bfz,\bar\bfz)=\log|g(\bfz,\bar\bfz)|+i\theta.
\]
From this and the obvious equality $|g(\bfz,\bar\bfz)|^2=g(\bfz,\bar\bfz)\bar g(\bfz,\bar\bfz)$,
we have
\begin{Proposition}
\begin{eqnarray}
&\nabla \theta=i \left(
\cfrac{\overline{ g_{z_1}}} {\bar g}-\cfrac  {g_{\bar z_1}}g,
\dots, \cfrac {\overline{g_{z_n}}}{\bar g}-\cfrac  {g_{\bar z_n}}g 
\right),
\end{eqnarray}
\begin{eqnarray}
d\theta=-i\left(
\frac{\partial g+\bar\partial g}{g}-\frac{\partial \bar g+\bar \partial \bar g}{\bar g}
\right ),
\end{eqnarray}
where $\partial,\bar\partial$  are defined  for a mixed function $h$ by 
\[dh=\partial h+\bar \partial h,\quad \partial h=\sum_{j=1}^n h_{z_j}dz_j,\quad
\bar\partial h=\sum_{j=1}^nh_{\bar z_j}d\bar z_j.
\]
\end{Proposition}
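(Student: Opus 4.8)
The plan is to obtain both formulas by a single logarithmic differentiation, starting from the characterization already recorded, $\log g(\bfz,\bar\bfz)=\log|g(\bfz,\bar\bfz)|+i\theta$. Combining this with $|g|^2=g\bar g$, hence $\log|g|=\tfrac12(\log g+\log\bar g)$, I would first eliminate $\log|g|$ to get the clean expression
\[
\theta=\frac{1}{2i}\bigl(\log g-\log\bar g\bigr).
\]
Here $\log$ is only a locally chosen branch and $\theta$ is genuinely multivalued as an $S^1$-valued function, but after differentiation the branch ambiguity disappears: the one-form $dg/g-d\bar g/\bar g$, and with it $d\theta$ and $\nabla\theta$, is globally well defined on ${\BS}_r\setminus K_r$, where $g\neq0$. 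So it suffices to compute locally.

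For the hermitian gradient, I would feed this expression into the definition $\nabla h=2(\overline{\partial h/\partial z_1},\dots,\overline{\partial h/\partial z_n})$. Differentiating gives $\partial\theta/\partial z_j=\tfrac{1}{2i}(g_{z_j}/g-\bar g_{z_j}/\bar g)$; applying the elementary identities $\partial\bar g/\partial z_j=\overline{\partial g/\partial\bar z_j}=\overline{g_{\bar z_j}}$ and $\overline{\partial g/\partial z_j}=\overline{g_{z_j}}$, together with $\overline{1/(2i)}=i/2$, and then multiplying by $2$ yields exactly $\nabla\theta=i\bigl(\overline{g_{z_1}}/\bar g-g_{\bar z_1}/g,\dots,\overline{g_{z_n}}/\bar g-g_{\bar z_n}/g\bigr)$.

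For the one-form, I would apply $d$ directly to $\theta=\tfrac{1}{2i}(\log g-\log\bar g)$, obtaining $d\theta=\tfrac{1}{2i}\bigl(dg/g-d\bar g/\bar g\bigr)$, then substitute the decompositions $dg=\partial g+\bar\partial g$ and $d\bar g=\partial\bar g+\bar\partial\bar g$ from the stated definition of $\partial,\bar\partial$ and group the terms with denominator $g$ separately from those with denominator $\bar g$; this gives the asserted formula for $d\theta$. The argument is entirely routine — there is no geometric obstacle — and the only place where care is required, and the likeliest source of a sign or factor slip, is the bookkeeping of the powers of $i$ and the conjugation rules relating $\partial/\partial z_j$, $\partial/\partial\bar z_j$ and complex conjugation. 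For that reason I would carry out the $\nabla\theta$ and $d\theta$ computations together from the single identity $\theta=\tfrac{1}{2i}(\log g-\log\bar g)$ rather than prove them independently.
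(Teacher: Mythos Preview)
Your approach is exactly what the paper intends: the paper offers no proof beyond the sentence that the proposition follows from $\log g=\log|g|+i\theta$ together with $|g|^2=g\bar g$, and your proposal simply fills in that logarithmic differentiation. One caveat your own bookkeeping warning anticipates: the honest output of $d\theta=\tfrac{1}{2i}\bigl(dg/g-d\bar g/\bar g\bigr)$ carries the coefficient $-i/2$ rather than the $-i$ printed in the statement, so the displayed formula for $d\theta$ appears to be off by a harmless factor of $2$ in the paper itself.
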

\subsection{Contact structure carried by a Milnor open book.}
We consider the existence problem of the contact structure carried by  a Milnor open book for
polar weighted homogeneous mixed functions. 
We consider a homogeneous mixed covering lifting
$g(\bfw,\bar\bfw)=f(w_1^a\bar w_1^b,\dots, w_n^a\bar w_n^b)$ with
$a>b\ge 0$
where $f(\bfz)$ is a convenient non-degenerate holomorphic
function defined in a neighborhood of the origin.
\subsubsection{Strongly polar homogeneous case.}
First we consider the easy case that $f(\bfz)$ is a 
 homogeneous polynomial of degree $d$.
Then $g(\bfw,\bar\bfw)$ is a strongly polar homogeneous polynomial
with $\rdeg\, g=d(a+b),\, \pdeg\,g=d(a-b)$.
In this case, we assert:
\begin{Theorem} Assume that $f(\bfz)$ is a  homogeneous polynomial
with $\pdeg\,f=d > 0$.
The canonical contact form $\al$ and $\omega$ is adapted with 
the Milnor open book
$\theta: S_r^{2n-1}\setminus K_r\to S^1$ for any $r>0$.
\end{Theorem}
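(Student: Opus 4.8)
The plan is to verify, one at a time, the three conditions for the canonical contact structure $\xi=\Ker\,\al$ on $\BS_r$ to be carried by the open book whose binding is $K_r$ and whose projection is $\theta=g/|g|$. Before that I would record the relevant homogeneity: since $f$ is homogeneous of degree $d$ it is weighted homogeneous for the weight $(1,\dots,1)$, so $g=\vphi_{a,b}^{*}f$ is strongly polar weighted homogeneous for the weight $(1,\dots,1)$, with $\rdeg\,g=d(a+b)$ and $\pdeg\,g=d(a-b)>0$; by Proposition \ref{Transversality} and Theorem \ref{open-book} the map $\theta\colon\BS_r\setminus K_r\to S^1$ together with the trivial tubular neighbourhood of $K_r$ is an open book for every $r>0$, so only compatibility with $\al$ is at stake.

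\emph{Condition (1): $\al|_{K_r}$ is a contact form on $K_r$.} This is exactly the assertion that $K_r$ is a contact submanifold of $\BS_r$, which by Lemma \ref{key3}(1) is equivalent to $C(\bfw,\bar\bfw)>0$ on $K_r$. When $b>0$ this is the weighted-homogeneous case of Lemma \ref{key3}(2), where the neighbourhood can be taken to be all of $\BC^n$; when $b=0$, $g$ is holomorphic and homogeneous and the strict positivity follows from Corollary \ref{holom}. Either way $C(\bfw,\bar\bfw)>0$ on $g\inv(0)\setminus\{\bf 0\}$, hence on $K_r$ for every $r>0$, so this step costs nothing beyond the lemmas already established.

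\emph{Condition (2): $d\al$ restricts to a symplectic form on each fibre $F_\eta=\theta\inv(\eta)$.} As recalled earlier this is equivalent to $d\theta(R)>0$, where the Reeb field is $R=i\bfw/(2\rho(\bfw))$, i.e.\ $R=\frac{i}{2r^{2}}\sum_{j}\bigl(w_j\partial_{w_j}-\bar w_j\partial_{\bar w_j}\bigr)$ on $\BS_r$. I would plug the formula for $d\theta$ derived earlier into an evaluation at $R$, using $\partial g(R)=\frac{i}{2r^{2}}\sum_j w_j g_{w_j}$ and $\bar\partial g(R)=-\frac{i}{2r^{2}}\sum_j\bar w_j g_{\bar w_j}$ (and the conjugate identities for $\bar g$), and then apply the strong Euler equalities (\ref{strongEuler}) for $g$, namely $\sum_j w_j g_{w_j}=\frac{m_r+m_p}{2}g=da\,g$ and $\sum_j\bar w_j g_{\bar w_j}=\frac{m_r-m_p}{2}g=db\,g$. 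The computation then reduces to $d\theta(R)=d(a-b)/r^{2}$, which is positive for all $r>0$ since $a>b\ge 0$ and $d>0$.

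\emph{Condition (3): the orientation of $K_r$ induced by $\al$ agrees with its orientation as the boundary of a fibre.} Writing $g=\Re g+i\,\Im g$, Corollary \ref{contact-form} applied to $g$ gives
\[
d\rho\land\al\land d\al^{n-2}\land d\Re g\land d\Im g=4^{n-1}(n-2)!\,C(\bfw,\bar\bfw)\,dx_1\land dy_1\land\cdots\land dx_n\land dy_n,
\]
a positive $2n$-form near $K_r$ by Condition (1); meanwhile the binding orientation $\Omega'$ of $K_r$ is fixed by the convention $d\rho\land\Omega'\land d\Re g\land d\Im g>0$. Since $\rho,\Re g,\Im g$ form part of a real coordinate system transverse to $K_r$ (the complete-intersection transversality used in the proof of Theorem \ref{holomorphic-like}), this forces the restriction of $\al\land d\al^{n-2}$ to $K_r$ to be a positive multiple of $\Omega'$, i.e.\ the contact orientation and the binding orientation coincide. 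I expect this orientation bookkeeping to be the most delicate part to get exactly right; Conditions (1) and (2) are essentially consequences of the lemmas and the Euler identities, and because every step is uniform in $r$ the statement follows for all $r>0$ at once.
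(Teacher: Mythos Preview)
Your proof is correct and follows the paper's approach: the heart of both is the verification that $d\theta(R)>0$ via the polar Euler identity for $g$ (your value $d(a-b)/r^{2}$ retains the normalization factor $\tfrac{1}{2r^{2}}$ from $R$ that the paper's displayed computation $2d(a-b)$ drops, but only positivity matters). The paper treats conditions (1) and (3) as already handled by the weighted-homogeneous case of Theorem~\ref{holomorphic-like} (which gives that $K_r$ is a \emph{positive} contact submanifold for all $r$), whereas you spell them out via Lemma~\ref{key3}, Corollary~\ref{holom}, and Corollary~\ref{contact-form}; this is careful bookkeeping rather than a different argument.
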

\begin{proof}
Recall that 
Reeb vector field $R$ and $d\theta$ are given  on ${\BS}_r$ by 
\[\begin{split}
 R(\bfz)&=\frac i{2 r^2}\sum_{j=1}^n (z_j\frac{\partial}{\partial z_j}-
\bar z_j\frac{\partial}{\partial\bar z_j})\\
d\theta&=-i\left\{
\frac{\partial g+\bar\partial g}{g}-\frac{\partial \bar g+\bar \partial \bar g}{\bar g}
\right\}
\end{split}
\]
In fact, we use the polar Euler equality:
\[
\sum_{j=1}^n\left (
 z_j g_{z_j}-\bar z_j g_{\bar z_j}\right)=d(a-b) g
 \]
 and its conjugate:
 \[\sum_{j=1}^n\left (
 \bar z_j {\bar g}_{\bar z_j}-z_j {\bar g}_{z_j}\right)=d(a-b)\bar  g.
 \]
 Using these Euler equalities, we get:
 \[
 \begin{split}
 d\theta(R)&=\frac{\partial g(R)+\bar\partial g (R)}g-\frac{\partial \bar g(R)+\bar\partial \bar g(R)}{\bar g}\\
 &=\sum_{j=1}^n \frac{z_j g_{z_j}-\bar z_jg_{\bar z_j}}{g}
 -\sum_{j=1}^n \frac{z_j {\bar  g}_{z_j}-\bar z_j {\bar g}_{\bar z_j}}{\bar g}\\
 &=2\,d\,(a-b)>0.
 \end{split}
 \]
This shows that for any radius $r>0$, the canonical contact structure on $K_r\subset {\BS}_r$ is adapted
 with  the Milnor  fibration.
\end{proof}
\section{General case}
We are interested in the 
existence of open book structure adapted to the contact structure
which is the restriction of $\al$ to the link $K_r\subset {\BS}_r$ where 
$K_r=g\inv(0)\cap {\BS}_r$. We have shown  that there exists a canonical Milnor
 fibration
on $g/|g|:{\BS}_r\setminus\{K_r\}\to S^1$ by \cite{OkaMix}.
However this fibration is not adapted with the symplectic structure
given by $d\al$. Therefore
we will change the contact form $\al$ without changing the contact
structure $\xi$ so that the new contact form will be carried by the 
Milnor open book.
We follow the proof  of Theorem 3.9 in \cite{C-N-PP}  for the holomorphic functions in 
Caubel-N\'emethi-Popescu-Pampu.

We modify the contact form $\al$ by
\[
 \al_c=e^{-c|g|^2}\al
\]
 with a sufficiently large positive real number  $c>0$.
This does not change the contact structure $\xi=\Ker\,\al$ but the two form $\omega_c=d\al_c$ is changed
as 
\[
 \omega_c=d(e^{-c|g|^2})\land \al+e^{-c|g|^2} d\al
\]
and the corresponding symplectic structure changes.
Consider  the new Reeb vector field $R_c$. Put $H=e^{-c|g|^2}$.
Put also $R_c=k(R+S_c)$ with $S_c$ is tangent to $\xi$.
Then we get $kH=1$. As
\[
 d\al_c=dH\land \al+H\,d\al,
\]
the condition for $R_c$ to be the Reeb vector field
 $\iota_{R_c}d\al_c|_{\xi}=0$
gives the condition:
\begin{eqnarray}\label{eq16}
\iota_{S_c}\omega|_{\xi}=\frac{dH}H|_{\xi}=-c\,d|g|^2.
\end{eqnarray}
Put $\pi: T_{\bfw}\BC^n\to \xi(\bfw)$ be the hermitian  orthogonal projection.
Namely, $\pi(\bfv)=\bfv-(\bfv, \tilde R) \tilde R$ and 
$\tilde R=R/\|R\|$.
Then (\ref{eq16}) implies  by (\ref{omega-formula}) that 
\begin{eqnarray*}
\iota_{S_c}\omega|_{\xi}&=& -c\,d|g|^2|_{\xi}=\Re(-c\gtr |g|^2,\cdot)|_{\xi}\\
&=&\frac 14\omega(i c \gtr|g|^2,\cdot)|_{\xi}
=\omega(\pi(i c\gtr|g|^2)/4,\cdot)|_{\xi}\\
&=&\iota_{\pi(i c\gtr|g|^2)/4}\omega|_{\xi}.
\end{eqnarray*}
As $\omega$ is non-degenerate on $\xi$,
we get 
\begin{eqnarray}\label{different}
S_c=\pi(ic \gtr|g|^2/4)
\end{eqnarray}
Thus we get 
\begin{eqnarray}
 |g|^2d\theta(R_c) &=& k|g|^2d\theta(R)+k\Re(|g|^2\gtr\theta,S_c)\\
&=& k|g|^2d\theta(R)+k\Re(\pi(|g|^2\gtr\theta),\pi(ic \gtr|g|^2/4)).
\end{eqnarray}
For simplicity, we introduce two vectors
\begin{eqnarray}
\gtr_{\partial} g&=(g_{w_1},\dots, g_{w_n})\\
\gtr_{\bar\partial} g&=(g_{\bar w_1},\dots, g_{\bar w_n}).
\end{eqnarray}
\begin{Remark}
In  our previous paper \cite{OkaPolar}, we used the notation $dg$ and $\bar dg$
instead of  $\gtr_{\partial}g$ and $\gtr_{\bar\partial}g$. We changed notations as the previous
notations are confusing with 1-forms $\partial g,\,\bar\partial g$.
We use $dg$ not for $\partial g$ but $dg=(\partial+\bar \partial)g$.
\end{Remark}
Recall that 
\begin{eqnarray}
\gtr |g|^2&=2g\,\overline{\gtr_{\partial} g}+2 \bar g \gtr_{\bar\partial} g\\
|g|^2\gtr \theta &=ig \overline{\gtr_{\partial} g}-i\bar g \gtr_{\bar\partial} g.
\end{eqnarray}
Thus $2|g|^2\gtr\theta$ and $i\gtr |g|^2$ are different in the case of  mixed functions.
This makes a difficulty. (In the holomorphic function case, 
they are the
same up to a scalar multiplication, as $\gtr_{\bar\partial} g\,  $ vanishes.)

Put $\pi':\BC^n\to \BC\cdot R$ be the orthogonal projection
to the complex line $\BC\cdot R=\BC\cdot \bfw$ generated by $R$ or $\bfw$. Namely
$\pi'(\bfv)=(\bfv,\bfw)\bfw/\|\bfw\|^2=(\bfv,R)R/\|R\|^2$.
Then $\pi(\bfv)=\bfv-\pi'(\bfv)$. Consider the expression:
\begin{eqnarray}
&g\,\overline{\gtr_{\partial} g}=\bfv_{11}+\bfv_{12},\,\quad
\begin{cases} &\bfv_{11}=\pi(g\,\overline{\gtr_{\partial} g})\\
              &\bfv_{12}=\pi'(g\,\overline{\gtr_{\partial} g})
\end{cases}\\
&\bar g \gtr_{\bar\partial} g=\bfv_{21}+\bfv_{22},\,\quad
\begin{cases} &\bfv_{21}=\pi(\bar g\,{\gtr_{\bar\partial} g})\\
              &\bfv_{12}=\pi'(\bar g\,{\gtr_{\bar\partial} g}).
\end{cases}
\end{eqnarray}
Using this expression, we get
\begin{eqnarray}
\pi(ic\gtr|g|^2/4)&=\cfrac{i c}{2} (\bfv_{11}+\bfv_{21})\\
\pi(|g|^2\gtr \theta)&=i(\bfv_{11}-\bfv_{21})
\end{eqnarray}
Thus we get
\begin{eqnarray}
 |g|^2d\theta(R_c)
&= & k|g|^2d\theta(R)+k\Re(\pi(|g|^2\gtr\theta),\pi(ic \gtr|g|^2/2))\\
&=&k|g|^2d\theta(R)+ \frac{ck}2 (\|v_{11}\|^2-\|v_{21}\|^2).\notag
\end{eqnarray}
Here we have used the equality: $\Re(\bfv_{21},\bfv_{11})= \Re(\bfv_{11},\bfv_{21})$.
The key assertion is the following.
\begin{Lemma}\label{positivity}
We have the inequality: $\|v_{11}\|^2-\|v_{21}\|^2\ge 0$
and the equality takes place if and only if
$\overline{\gtr_{\partial} g}(\bfw,\bar\bfw)=\la_1\bfw$ and
 $\gtr_{\bar\partial}g(\bfw,\bar\bfw)=\la_2\bfw$
for some $\la_1,\la_2\in\BC$.
In this case, we have also
$\gtr\theta=\la R$ for some $\la\in \BC$.
\end{Lemma}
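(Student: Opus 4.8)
The plan is to reduce everything to a comparison of the hermitian projections of the two gradient vectors onto $\xi(\bfw)=\bfw^{\perp}$, exploiting the rigid form of the chain rule for the homogeneous covering $\vphi=\vphi_{a,b}$. Since $\pi$ is $\BC$-linear and $g,\bar g$ are scalars, we have $\bfv_{11}=g\,\pi(\overline{\gtr_{\partial}g})$ and $\bfv_{21}=\bar g\,\pi(\gtr_{\bar\partial}g)$, so
\[
\|\bfv_{11}\|^2-\|\bfv_{21}\|^2=|g|^2\bigl(\|\pi(\overline{\gtr_{\partial}g})\|^2-\|\pi(\gtr_{\bar\partial}g)\|^2\bigr),
\]
and it suffices to compare these two norms. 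For this I would use the elementary identity $\|\pi(\bfx)\|^2=\|\bfx\|^2-|(\bfx,\bfw)|^2/\|\bfw\|^2$ for the orthogonal projection onto $\bfw^{\perp}$, which turns the problem into a comparison of the scalars $\|\bfx\|$ and $|(\bfx,\bfw)|$ for $\bfx=\overline{\gtr_{\partial}g}$ and $\bfx=\gtr_{\bar\partial}g$.

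The key step is to feed in the chain-rule formulas $g_{w_j}=a\,w_j^{a-1}\bar w_j^{\,b}f_{z_j}(\vphi(\bfw))$ and $g_{\bar w_j}=b\,w_j^{a}\bar w_j^{\,b-1}f_{z_j}(\vphi(\bfw))$ already recorded above. They give, for every $j$, the pointwise relations $|g_{\bar w_j}|=\tfrac ba|g_{w_j}|$ and $\bar w_j\,g_{\bar w_j}=\tfrac ba\,w_j\,g_{w_j}$. Summing the first over $j$ gives $\|\gtr_{\bar\partial}g\|^2=\tfrac{b^2}{a^2}\|\overline{\gtr_{\partial}g}\|^2$; summing the second gives $(\gtr_{\bar\partial}g,\bfw)=\tfrac ba\sum_j g_{w_j}w_j$, whereas $(\overline{\gtr_{\partial}g},\bfw)=\overline{\sum_j g_{w_j}w_j}$, so $|(\gtr_{\bar\partial}g,\bfw)|=\tfrac ba|(\overline{\gtr_{\partial}g},\bfw)|$. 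Substituting into the projection-norm identity yields $\|\pi(\gtr_{\bar\partial}g)\|^2=\tfrac{b^2}{a^2}\|\pi(\overline{\gtr_{\partial}g})\|^2$, hence
\[
\|\bfv_{11}\|^2-\|\bfv_{21}\|^2=\Bigl(1-\tfrac{b^2}{a^2}\Bigr)\,|g|^2\,\|\pi(\overline{\gtr_{\partial}g})\|^2\ \ge\ 0,
\]
since $0\le b<a$. The one thing I expect to matter is recognizing that both the norm and the $\bfw$-component of the two gradients scale by exactly the same factor $b/a$; this is special to the homogeneous covering (exponents independent of $j$) and is precisely what would fail for a general mixed function, so it is the heart of the argument.

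For the equality part, since $1-b^2/a^2>0$ and $g\ne0$ on ${\BS}_r\setminus K_r$, equality forces $\pi(\overline{\gtr_{\partial}g})=0$, i.e. $\overline{\gtr_{\partial}g}=\la_1\bfw$ for some $\la_1\in\BC$; the displayed proportionality then forces $\pi(\gtr_{\bar\partial}g)=0$ as well, so $\gtr_{\bar\partial}g=\la_2\bfw$ (concretely $g_{w_j}=\overline{\la_1}\,\bar w_j$ together with $\bar w_j g_{\bar w_j}=\tfrac ba w_j g_{w_j}$ gives $\la_2=\tfrac ba\overline{\la_1}$). The converse is immediate because $\pi$ annihilates $\BC\bfw$. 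Finally, by the formula $\gtr\theta=i\bigl(\tfrac1{\bar g}\overline{\gtr_{\partial}g}-\tfrac1g\gtr_{\bar\partial}g\bigr)$, under the equality condition $\gtr\theta=i\bigl(\tfrac{\la_1}{\bar g}-\tfrac{\la_2}{g}\bigr)\bfw$ is a complex multiple of $\bfw$, hence of $R=\tfrac i{2\rho}\bfw$, so $\gtr\theta=\la R$ for the corresponding $\la\in\BC$.
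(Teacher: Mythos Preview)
Your proof is correct and follows essentially the same approach as the paper's: both use the chain rule for $\vphi_{a,b}$ together with the Pythagorean identity $\|\pi(\bfx)\|^2=\|\bfx\|^2-|(\bfx,\bfw)|^2/\|\bfw\|^2$ to show that $\|\bfv_{11}\|^2$ and $\|\bfv_{21}\|^2$ are the \emph{same} nonnegative quantity multiplied by $a^2|g|^2$ and $b^2|g|^2$ respectively. Your observation that $|g_{\bar w_j}|=\tfrac ba|g_{w_j}|$ and $\bar w_j g_{\bar w_j}=\tfrac ba w_j g_{w_j}$ lets you bypass the paper's explicit introduction of the scalars $\gamma=\sum_j|f_{z_j}|^2|w_j|^{2(a+b-1)}$ and $\beta=|\sum_j\overline{f_{z_j}}\bar w_j^aw_j^b|^2$, but the underlying computation is identical.
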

\begin{proof}
Let $\bfv_1=g\overline{\gtr_{\partial} g}$ and
 $\bfv_2=\bar g \gtr_{\bar\partial}g$.
As $\{\bfv_{11},\bfv_{12}\}$ and $\{\bfv_{12},\bfv_{22}\}$ are hermitian
 orthogonal, we have
\begin{eqnarray*}
 \|\bfv_{11}\|^2&=&\|\bfv_1\|^2-\|\bfv_{12}\|^2,\quad
\|\bfv_{21}\|^2=\|\bfv_2\|^2-\|\bfv_{22}\|^2.
\end{eqnarray*}
We go now further precise expression. Put
\[
 \bfv_1=(v_1^1,\dots, v_1^n),\quad \bfv_2=(v_2^1,\dots, v_2^n).
\]
Then we have 
\begin{eqnarray*}
v_{1}^j&=&g(\bfw,\bar\bfw)\, \,\overline{f_{z_j}(\vphi(\bfw,\bar\bfw))}\,\,
a\bar w_j^{a-1}w_j^b\\
v_{2}^j&=&\bar g(\bfw,\bar\bfw)\,\, f_{ z_j}(\vphi(\bfw,\bar\bfw))\,\,
bw_j^a \bar w_j^{b-1}.
\end{eqnarray*}
Recall that $R=i\bfw/2\rho(\bfw)$.
Thus
\[\begin{split}
 \bfv_{12}&=\left(\sum_{j=1}^n g(\bfw,\bar\bfw)\,\overline{f_{z_j}(\vphi(\bfw,\bar\bfw))}
a\bar w_j^{a}w_j^b\right)\bfw/\|\bfw\|^2\\
&=
a\,g(\bfw,\bar\bfw)\left(\sum_{j=1}^n\overline{f_{z_j}}(\vphi(\bfw,\bar\bfw))\bar w_j^{a}w_j^b\right)
\bfw/\|\bfw\|^2\\
\bfv_{22}&=\left(\sum_{j=1}^n \bar g(\bfw,\bar\bfw) f_{z_j}(\vphi(\bfw,\bar\bfw)) b\,w_j^a \bar w_j^b\right)
   \bfw/\|\bfw\|^2\\
&=b\bar (\bfw,\bar\bfw)g(\bfw,\bar\bfw)\left(\sum_{j=1}^n  f_{z_j}(\vphi(\bfw,\bar\bfw)) w_j^a \bar w_j^b\right)
   \bfw/\|\bfw\|^2.
\end{split}
\]
Thus  
we get 
\begin{eqnarray*}
0\le \|\bfv_{11}\|^2&=&\|\bfv_1\|^2-\|\bfv_{12}\|^2\\
&=&a^2|g|^2\sum_{j=1}^n |f_{z_j}|^2|w_j|^{2(a+b-1)}-a^2|g|^2
\left|\sum_{j=1}^n\overline{f_{z_j}}\bar w_j^{a}w_j^b\right|^2\\
&=&a^2|g|^2(\ga-\be)\\
0\le \|\bfv_{21}\|^2&=&\|\bfv_2\|^2-\|\bfv_{22}\|^2\\
&=& b^2|g|^2\sum_{j=1}^n |f_{z_j}|^2|w_j|^{2(a+b-1)}-b^2|g|^2
\left|\sum_{j=1}^n{f_{z_j}} w_j^{a}\bar w_j^b\right|^2\\
&=&b^2|g|^2(\ga-\be)\\
&&\text{where}\,\,
\begin{cases}
\ga&=\sum_{j=1}^n |f_{z_j}|^2|w_j|^{2(a+b-1)}\\
\be&=\left|\sum_{j=1}^n\overline{f_{z_j}}\bar w_j^{a}w_j^b\right|^2.
\end{cases}
\end{eqnarray*}
Thus  $\ga\ge \be$ and we have 
\[\begin{split}
&\|\bfv_{11}\|^2-\|\bfv_{21}\|^2=(a^2-b^2)|g|^2(\ga-\be)\ge 0
\end{split}
\]
and the equality holds if and only if $\ga=\be$. This is equivalent to 
$\|\bfv_{11}\|=\|\bfv_{21}\|=0$ and this  implies
$\overline{\gtr_{\partial} g}(\bfw,\bar\bfw)=\la_1\bfw$ and
 $\gtr_{\bar\partial}g(\bfw,\bar\bfw)=\la_2\bfw$
for some $\la_1,\la_2\in\BC$.
The last assertion follows from
$|g|^2\gtr\theta=i(\bfv_1-\bfv_2)$.

\end{proof}
\subsection{Main theorem}
Now we are ready to  state our main theorem.
Let $\vphi(\bfw,\bar\bfw)=(w_1^a\bar w_1^b,\dots, w_n^a\bar w_n^b)$ with
$a>b> 0$ as before.
\begin{MainTheorem}\label{MainTheorem2}
Assume that $f(\bfz) $ is a convenient  
non-degenerate 
holomorphic
 function
so that $g(\bfw,\bar\bfw)=\vphi^*f(\bfw,\bar\bfw)$ is a convenient 
non-degenerate
 mixed function
 of strongly polar 
weighted homogeneous face type. 
Then there exists a positive number $r_0$ such that the Milnor open book
$f/|f|:\, {\BS}_r\setminus K_r\to S^1$ carries a contact structure for any
 $r>0$ with $r\le r_0$.

If further $f(\bfz)$ is a 
 weighted homogeneous
 polynomial,
we can take  $r_0=\infty$ and any $r>0$.
\end{MainTheorem}
\begin{proof}

For the proof, we do the same discussion as that of Caubel-N\'emethi-Popescu-Pampu
\cite{C-N-PP}.
Let 
\[
Z_\delta:=\{\bfw\in S_r^{2n-1}\setminus V_\de\,|\, d\theta(R)\le 0\},
\quad
 V_\de=S_r^{2n-1}\cap g\inv (D_\delta)
\]
where $\de$ is sufficiently small so that $f\inv(0)$ and ${\BS}_r$ are
transverse and  $V_\de$ is a trivial tubular neighborhood.
Let $\al_c$ and $R_c$ be as before.
As we have shown that 
 \begin{eqnarray*}
 |g|^2d\theta(R_c)
=k|g|^2d\theta(R)+ \frac{ck}2 (\|v_{11}\|^2-\|v_{21}\|^2)\notag
\end{eqnarray*}
with $k=1/e^{-c|g|^2}$ and the second term is non-negative  and the 
 equality holds ({\em i.e.} $ \|v_{11}\|^2-\|v_{21}\|^2=0$) if and only if 
$\nabla \theta=\la R$.   In this case,  $d\theta(R)=\Re \la\|R\|^2$ and $d\theta(R)$
is positive if  $\Re \la>0$. Thus taking sufficiently
 large $c>0$, we only need to show that $\nabla \theta$ and $R$ are
 linearly independent on $Z_\de$. Thus 
the following lemma
 completes the proof.
 Compare with Proposition 3.8 (\cite{C-N-PP}).
\end{proof}
\begin{Lemma}\label{Positive1}
Assume that $\nabla \theta(\bfw)=\la R(\bfw)$ on $\bfw\in Z_{\delta}$
for some $\la\in \BC$.
\begin{enumerate}
\item Assume that $f(\bfz)$ is convenient non-degenerate weighted
      homogeneous polynomial.
Then $\la$ is a positive real number. 
\item If $f(\bfz)$ is not weighted homogeneous but a convenient non-degenerate
mixed function of strongly polar weighted homogeneous face type,
there exists a positive number $r_0$  such that 
 $ \Re\la$ is positive for any $\bfw\in {\BS}_r\setminus g\inv(0)$ and 
$r\le r_0$. 
\end{enumerate}
\end{Lemma}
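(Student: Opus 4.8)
The plan is to reduce everything to Lemma~\ref{positivity} and then, for the case that $f$ is not weighted homogeneous, to run a Curve Selection argument. First I would unpack the hypothesis $\gtr\theta(\bfw)=\la R(\bfw)$. Since $R=i\bfw/2\rho$ is a nonzero multiple of $\bfw$ and $|g|^2\gtr\theta=i\bigl(g\,\overline{\gtr_{\partial}g}-\bar g\,\gtr_{\bar\partial}g\bigr)$, the hypothesis forces $g\,\overline{\gtr_{\partial}g}-\bar g\,\gtr_{\bar\partial}g$ to be a scalar multiple of $\bfw$, i.e. $\bfv_{11}=\bfv_{21}$ in the notation of Lemma~\ref{positivity}; hence by the equality case of that lemma there are $\la_1,\la_2\in\BC$ with $\overline{\gtr_{\partial}g}(\bfw,\bar\bfw)=\la_1\bfw$ and $\gtr_{\bar\partial}g(\bfw,\bar\bfw)=\la_2\bfw$. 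Substituting $g_{w_j}=aw_j^{a-1}\bar w_j^{b}f_{z_j}(\vphi(\bfw))$ and $g_{\bar w_j}=bw_j^{a}\bar w_j^{b-1}f_{z_j}(\vphi(\bfw))$ gives $\la_2=\tfrac ba\bar\la_1$, so $\la=\dfrac{2\rho}{|g|^2}\bigl(g\la_1-\tfrac ba\bar g\bar\la_1\bigr)$ and
\[
\Re\la=\frac{2\rho(a-b)}{a|g|^2}\,\Re(g\la_1).
\]
As $\rho>0$, $a-b>0$ and $|g|^2>0$ off $g\inv(0)$, it suffices to show $\Re(g\la_1)>0$. Finally, multiplying $g_{w_j}=\bar\la_1\bar w_j$ by $w_j$ and using $w_j^a\bar w_j^b=\vphi(\bfw)_j$, one gets the key relation $a\,z_j f_{z_j}(\bfz)=\bar\la_1|w_j|^2$ for every $j$, with $\bfz:=\vphi(\bfw,\bar\bfw)$.

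For part (1), $f$ being weighted homogeneous of degree $d$ and weight $P=(p_1,\dots,p_n)$, I would multiply the key relation by $p_j$, sum over $j$, and use the Euler identity $\sum_j p_j z_j f_{z_j}=df$: this yields $ad\,g(\bfw,\bar\bfw)=\bar\la_1\sum_j p_j|w_j|^2$, whence $g\la_1=ad|g|^2/\sum_j p_j|w_j|^2>0$ is real, and putting this back $\la=2\rho d(a-b)/\sum_j p_j|w_j|^2>0$. This holds for all $r>0$, and in particular gives $r_0=\infty$ in the weighted homogeneous case.

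For part (2) I would argue by contradiction via the Curve Selection Lemma \cite{Hamm1}. If the claim failed, the semialgebraic set (clear the denominator $|g|^2$) of $\bfw\in\BC^n\setminus g\inv(0)$ with $\gtr\theta(\bfw)=\la R(\bfw)$ and $\Re\la\le 0$ would have the origin in its closure, giving a real analytic arc $\bfw(t)$, $0\le t<\eps$, with $\bfw(0)={\bf 0}$ and $\bfw(t)$ in that set for $t>0$. Write $I=\{j:w_j(t)\not\equiv 0\}$, $w_j(t)=b_jt^{v_j}(1+O(t))$ ($b_j\ne 0$, $v_j\in\BZ_{>0}$, $j\in I$), $\bfz(t)=\vphi(\bfw(t),\bar\bfw(t))$, $\bfc=(b_j^a\bar b_j^b)_{j\in I}$, $U=(v_j)_{j\in I}$, $D=\rdeg_U f^I$; then the key relation reads $a\,z_j f_{z_j}(\bfz(t))=\bar\la_1(t)|w_j(t)|^2$ for $j\in I$. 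Here $\bar\la_1\not\equiv 0$, for otherwise $\gtr_{\partial}g$ and $\gtr_{\bar\partial}g$ vanish along the arc, contradicting that $g^I$ has an isolated singularity. Now the point is that, for each $j\in I$, $z_j f_{z_j}(\bfz(t))$ is a positive real multiple of $\bar\la_1(t)$; comparing the resulting $t$-orders and leading coefficients with the weighted homogeneous face function $f^I_U$ and its Euler identity $\sum_{j\in I}v_j z_j\partial_{z_j}f^I_U=Df^I_U$, one finds that $f^I_U(\bfc)$ is a nonzero real multiple of the leading coefficient of $\bar\la_1(t)$, so $f^I_U(\bfc)\ne 0$. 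Consequently $f(\bfz(t))$ and $\sum_j z_j f_{z_j}(\bfz(t))=\bar\la_1(t)\rho(\bfw(t))/a$ both have order $(a+b)D$ in $t$, and since $g(\bfw(t))\la_1(t)=\dfrac a{\rho(\bfw(t))}\,f(\bfz(t))\,\overline{\sum_j z_j f_{z_j}(\bfz(t))}$, its lowest-order term in $t$ is a positive real constant times an even power of $t$. Hence $\Re\bigl(g(\bfw(t))\la_1(t)\bigr)>0$ for $0<t\ll1$, contradicting $\Re\la(t)\le 0$.

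The reduction through Lemma~\ref{positivity} and part (1) are routine; the main obstacle is the leading-term bookkeeping in part (2), and within it the proof that $f^I_U(\bfc)\ne 0$ (no cancellation in the leading term of $f$ along the arc) — this is exactly where the rigidity forced by ``$z_j f_{z_j}(\bfz(t))$ share a common argument for $j\in I$'', combined with the Euler relation for $f^I_U$, is decisive.
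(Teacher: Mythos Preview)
Your argument is correct, but the route differs from the paper's. The paper never passes through Lemma~\ref{positivity}: it works directly with the relation $\la\,w_j=\overline{g_{w_j}}/\bar g-g_{\bar w_j}/g$ coming from $\nabla\theta=\la R$, multiplies by $p_j\bar w_j$, sums, and applies the \emph{strong} Euler equalities~(\ref{strongEuler}) for $g$ itself (part~1), or for the face function $g_P$ along a Curve-Selection arc (part~2), to get $\la_0\sum p_k|a_k|^2=2\,\pdeg_P g_P>0$. You instead use the equality case of Lemma~\ref{positivity} to upgrade $\nabla\theta\parallel R$ to the stronger pair $\overline{\gtr_\partial g}=\la_1\bfw$, $\gtr_{\bar\partial}g=\la_2\bfw$, then exploit the explicit form $g=\vphi_{a,b}^*f$ to reduce to the holomorphic key relation $a\,z_jf_{z_j}(\bfz)=\bar\la_1|w_j|^2$ and the ordinary Euler identity for $f$ (resp.\ $f^I_U$). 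Your leading-term bookkeeping in part~(2) is sound: summing $v_j$ times the key relation and comparing $t^{(a+b)D}$-coefficients gives $D\,f^I_U(\bfc)=(v_{\min}\mu_0/a)\sum_{v_j=v_{\min}}|b_j|^2$, so $f^I_U(\bfc)$ is a positive multiple of the leading coefficient $\mu_0$ of $\bar\la_1$, and $g\la_1$ has positive real leading term as you claim.

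The trade-off: your argument is tied to $g=\vphi_{a,b}^*f$ through Lemma~\ref{positivity} and the explicit derivative formulas, whereas the paper's proof applies to any convenient non-degenerate $g$ of strongly polar positive weighted homogeneous face type (this extra generality is exactly what the Remark after the lemma records). In exchange, your version makes the role of the underlying holomorphic $f$ and its classical Euler identity completely transparent.
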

\begin{proof}
{\bf (1)} Assume that  $f(\bfz)$ is a weighted homogeneous of degree
 $d$ with weight vector
 $P=(p_1,\dots, p_n)$.
Let $m_r=(a+b)d$ and $m_p=(a-b)d$, the radial and polar degree of $g(\bfw,\bar\bfw)$.
The assumption says that
\begin{eqnarray}\label{str-pos}
\la\,w_j=\frac 1{\bar g(\bfw,\bar\bfw)}
\overline{g_{w_j}}(\bfw,\bar\bfw)-\frac 1{g(\bfw,\bar\bfw)}
g_{\bar w_j}(\bfw,\bar \bfw)
\end{eqnarray} for $j=1,\dots, n$.
Taking  the summation of (  \ref{str-pos})$\times  p_j\bar w_j $ for $j=1,\dots,n$, we get:
\[\begin{split}
\la\sum_{j=1}^n p_j|w_j|^2&=\frac 1{\bar g(\bfw,\bar\bfw)}\sum_{j=1}^n
   p_j\bar w_j\overline{g_{w_j}}(\bfw,\bar\bfw)-\frac 1{
   g(\bfw,\bar\bfw)}\sum_{j=1}^n p_j\bar w_jg_{\bar w_j}(\bfw,\bar\bfw)\\
%
&=(m_r+m_p)-(m_r-m_p)=2m_p>0
\end{split}
\]
by the  strong Euler equalities (  \ref{strongEuler}).
This implies $\la$ is a positive number.

{\bf (2) General case.} Assume that $g(\bfw,\bar \bfw)$ is a convenient
 non-degenerate mixed function  of strongly polar weighted homogeneous
 face type.
Assume that the assertion (2) does not hold. Using Curve Selection Lemma (\cite{Milnor, Hamm1}), we can find a real analytic curve
$\bfw(t)\in \BC^n\setminus f\inv(0)$ for $0<t\le \eps$ and 
Laurent  series $\la(t)$ such that 
\begin{eqnarray}\label{eq3}
\bigtriangledown \theta(\bfw(t))=\la(t) R(\bfw(t))
\end{eqnarray}
such that $\nabla \theta (R(\la(t)))\le 0$.
We show this give a   contradiction by showing  $\lim_{t\to 0}\arg
 \la(t)=0$.

Let $I=\{j\,|\, w_j(t) \not\equiv 0\}$. Then  $|I|\ge 2$ and we restrict our the discussion to
the coordinate subspace $\BC^I$ and $g^I=g|_{\BC^I}$.
For the notation's simplicity, we assume that $I=\{1,\dots, n\}$ hereafter.
Consider the Taylor (Laurent) expansions:
\begin{eqnarray*}
w_j(t)= a_j t^{p_j}+\text{(higher terms)},\quad j=1,\dots, n,\\
\la(t)=\la_0 t^{\ell}+\text{(higher terms)},\\
g(\bfw(t),\bar\bfw(t))=g_0 t^d+\text{(higher terms)}
\end{eqnarray*}
where $a_j, \la_0, g_0\ne 0$ and $p_j\in \BN,\, \ell\in \BZ$.
Then the equality (  \ref{eq3}) says
\begin{eqnarray}\label{16}
\la(t)\,w_j(t)= 
\cfrac{\overline{g_{w_j}}(\bfw(t),\bar\bfw(t))}{{\bar g(\bfw(t),\bar\bfw(t))}}
-  
\cfrac{g_{\bar w_j}(\bfw(t),\bar \bfw (t))}{{g(\bfw(t),\bar\bfw(t))}} ,
%
\end{eqnarray}
for $j=1,\dots,n$.
Consider the weight vector $P =(p_1,\dots, p_n)$ and the  face function
 $f_P$.
Then $g_P=\vphi^* f_P$.
By (  \ref{16})  we get the equalities:
\begin{eqnarray}\label{17}
\la_0a_j t^{p_j+\ell}+\dots=
\left(\cfrac{
\overline{(g_P) w_j}(\bfa)}{\bar {g_0}} 
-\cfrac{(g_P)_{ \bar w_j}(\bfa)}{g_0}\right) t^{d(P;f)-p_j-d}
+\dots.
\end{eqnarray}
where  $j=1,\dots, n$ and $\bfa=(a_1,\dots, a_n)$.
The order of the left side for $j$ is
$p_j+\ell$. The order of the right side is 
at least $d(P;f)-p_j-d$. 
Thus we have 
\begin{eqnarray}\label{17bis}
p_j+\ell\ge d(P;f)-p_j-d.
\end{eqnarray}
 Put
\[
C_j:=\left(
\cfrac{\overline{(g_P)_{w_j} }(\bfa)}{\bar {g_0}}- \cfrac{(g_P)_{\bar w_j}(\bfa)}{g_0}
\right).
\]
The equality in (  \ref{17bis}) holds if $C_j\ne 0$:
\begin{eqnarray}\label{18}
p_j+\ell=d(P,g)-p_j-d,\quad \text{if}\,\,C_j\ne 0.
\end{eqnarray}
We assert that 
\begin{Assertion}  There exists some $j$ such that $C_j\ne 0$.
\end{Assertion}
\begin{proof}
Assume that $C_1=\dots=C_n=0$. This implies that
$\overline{\nabla_{\partial}g(\bfa)}=
u\nabla_{\bar\partial}g(\bfa)$ with $u=\bar b/b$
and therefore we see  that
 $\bfa$ is a critical point of
$g_P:\BC^{*n}\to \BC$ by Proposition 1 (\cite{OkaPolar})
 which contradicts to the non-degeneracy assumption.
\end{proof}
Let  $p_{min}=\min\,\{p_j\,|\,\,j=1,\dots,n\}$ and $J=\{j\,|\, p_j=p_{min}\}$
and let $p_{max}=\max\{p_j\,|\,  C_j\ne 0\}$ and $J'=\{j\,|\, p_j=p_{max},\,C_j\ne 0\}$.
We assert that 
\begin{Assertion} $p_{min}=p_{max}$.
\end{Assertion}
\begin{proof}
Assume that $p_{min}<p_{max}$.
Then  we have a contradiction: For $k\in J$ and $j\in J'$,
\[
p_k+\ell<p_{j}+\ell=d(P,g)-p_j-d<d(P,g)-p_k-d
\]
which contradicts to (  \ref{18}).
\end{proof}
Thus we have proved the equivalence $C_j=0\iff j\notin J$ and 
comparing the leading  coefficients of
(\ref{17}),
\begin{eqnarray}
&\la_0\,a_k=C_k,\,\, p_{min}+\ell=d(P,g)-p_{min}-d,\,\forall k\in J. \label{eq1}
\end{eqnarray}
Then taking  the summation $\sum_{j\in J} p_j\bar a_j\times (  \ref{eq1})$ , we get the equality
\begin{eqnarray}
\sum_{k\in J} p_k\la_0\bar a_k  a_k=\sum_{k\in J}p_k\bar a_k C_k.
\end{eqnarray}
The left side is $\la_0\sum_{k\in J} p_k \,|a_k|^2\ne 0$.
The right side is
\begin{eqnarray*}
\sum_{k\in J}p_k\bar a_k C_k&=&\sum_{k=1}^np_k\bar a_k C_k\\
&=&\sum_{k=1}^np_k\bar a_k\left(
\overline{(g_P)_{w_j}}(\bfa,\bar\bfa)/\bar {g_0}
-  (g_P)_{\bar w_j}(\bfa,\bar\bfa)/ g_0
\right)\\
&=&(\rdeg(P,g_P)+\pdeg(P,g_P))\overline{ g_P}(\bfa,\bar \bfa)/{\bar {g_0}}\\
&&-(\rdeg(P,g_P)-\pdeg(P,g_P))g_P(\bfa,\bar \bfa)/{g_0}.
\end{eqnarray*}
As the left side is non-zero,  we have $g_P(\bfa,\bar \bfa)\ne 0$ and 
$d=d(P,g)$ and $g_0=g_P(\bfa,\bar\bfa)$.
Thus we finally obtain the equality
\[
\la_0\sum_{k\in J}p_k|a_k|^2=2\pdeg(P,g_P)
\]
which implies $\la_0>0$ and thus $\lim_{t\to 0}\arg\,\la(t)=0$.
As $d\theta(R(\bfw(t)))=\Re\nabla\theta(R(\bfw(t)))=\Re
 \la(t)\|R(\bfw(t))\|^2>0$ for a sufficiently small $t$, this is a contradiction.
\end{proof}
\begin{Remark}
1. Lemma \ref{Positive1} hold for any convenient non-degenerate mixed
 function 
$g(\bfw,\bar \bfw)$ of strongly polar weighted homogeneous face type,
as the proof do not use the assumption $g=\vphi^*f$.

2. Let $g=\vphi^*f$ where $f(\bfz)$ is a convenient non-degenerate
 holomorphic function and $\vphi_{a,b}$ is a homogeneous cyclic covering
 map.
Then 
\end{Remark}
\begin{Assertion} The link topology of $g\inv(0)$
is a combinatorial invariant and it is determined by 
$\Ga(f)$.
\end{Assertion}
\begin{proof}
Assume that $f'(\bfz)$ is another convenient non-degenerate holomorphic
 function with $\Ga(f')=\Ga(f)$ and $let g'=\vphi^* f'$. Take a one-parameter family
 $f_t(\bfz),\,0\le t\le 1$
so that $\Ga(f_t)=\Ga(f)$, $f_0=f, f_1=f'$ and $f_t(\bfz)$
is non-degenerate for any $t$. The we get a one-parameter family
$g_t:=\vphi^*f_t$ of mixed function of strongly polar weighted homogeneous
 face type. Then their links are certainly isotopic.
\end{proof}
Observe that there exists in general mixed functions $h(\bfw,\bar\bfw)$
which is convenient, non-degenerate and of strongly polar weighted
 homogeneous face type but it is not a homogenous lift of a holomorphic function.
In such a case, the topology of the links of $g$ and $h$ may be
 different.
See Example 5.4 in \cite{OkaVar}.

\def\cprime{$'$} \def\cprime{$'$} \def\cprime{$'$} \def\cprime{$'$}
  \def\cprime{$'$} \def\cprime{$'$} \def\cprime{$'$} \def\cprime{$'$}

\end{document}